\newtheorem{theorem}{Theorem}[section] 
\newtheorem{corollary}[theorem]{Corollary}
\newtheorem{proposition}[theorem]{Proposition}
\title[Spectral Properties of the Ruelle Operator]
{Spectral Properties of the Ruelle Operator for Product Type Potentials on Shift Spaces}
\author{L. Cioletti, M. Denker, A.~O. Lopes and M. Stadlbauer}
\begin{document}
\maketitle

\begin{abstract}
	We study a  class of potentials $f$ on  one sided full shift spaces over
	finite or countable alphabets, called potentials of product type.
	We obtain explicit formulae for the leading eigenvalue, the eigenfunction
	(which may be discontinuous) and the eigenmeasure of the Ruelle operator.
	The uniqueness property of these quantities is also discussed and it is
	shown that there always exists a Bernoulli equilibrium state even if $f$
	does not satisfy Bowen's condition.
	
	We apply these results to potentials
	$f:\{-1,1\}^\mathbb{N} \to \mathbb{R}$ of the form
	$$
	f(x_1,x_2,\ldots)
	=
	x_1 + 2^{-\gamma} \, x_2 + 3^{-\gamma} \, x_3 + ...+n^{-\gamma} \, x_n + \ldots
	$$
	with $\gamma >1$. For $3/2 < \gamma \leq 2$, we obtain the existence of two different 
	eigenfunctions. Both functions are (locally) unbounded and exist a.s. 
	(but not everywhere) with respect to the eigenmeasure and the measure 
	of maximal entropy, respectively.
\end{abstract}

\section{Introduction}

The theory of Gibbs states in physics and mathematics led to the notion 
of  the pressure function and its variational formula for 
dynamical systems (Ruelle 1967, \cite{Ruelle1967} and Walters 1975, \cite{Wal1975}). 
Since then a variety of results has been published to clarify 
existence and uniqueness of equilibrium states maximizing the pressure, 
and this note is in the same spirit.

The classical condition for uniqueness of the equilibrium state 
requires summable variations  and was relaxed by  Bowen (\cite{Bowen1974}) 
using a condition which is named after him. 
This  has been further investigated by Walters 1978 (\cite{Wal78}) 
who introduced a slightly stronger condition, which is referred to 
as Walters' condition, see also \cite{Bousch2001}. 
Yuri in 1998 (\cite{Yur98}) coined the term weak bounded variation 
and also showed uniqueness.  
For  many classes of maps on compact spaces uniqueness has been 
proved as well, as a recent examples for this,  
Climenhaga and Thompson in 2013 (\cite{Climenhaga2013}) used a 
restricted  Bowen condition, and Iommi and Todd (\cite{Iommi2013}) 
studied the existence of phase transitions for 
grid potentials (see \cite{Markley1982}) on full shift spaces. 
We finally mention Sarig's work in  2001 (\cite{Sarig2001}) 
which opened a new field of studying this question 
on countable subshifts (the non-compact case) 
using Gurevic' pressure, or, for a more general approach to 
pressure, the notion introduced by Stratmann and 
Urba\'nski in 2007 (\cite{SU07}).

In expansive dynamical systems an equilibrium state always exists, 
leading to the problem of uniqueness and
continuity properties of the density of the equilibrium state 
with respect to {\it canonical} measures.  
These canonical measures may be defined as conformal measures 
(in many cases the eigenmeasure of the Ruelle operator associated to
the normalized potential or simply Gibbs measures on shift spaces) 
or  - as we show below - 
product measures (for example a Bernoulli measure on shift spaces).

In this note we deal with a dynamical system $T:X\to X$ where $T$ denotes the shift transformation on $X=\mathcal A^{\mathbb N}$, where $\mathcal A$ is a finite or countable set, called the alphabet of the dynamics, and where $X$ is equipped with the product topology of pointwise convergence and the associated Borel $\sigma$-field.
We consider potential functions
$$ f:X\to \mathbb R$$
which can be written in the form
$$ f(x)= \sum_{n=1}^\infty f_n(x_n),\qquad x=(x_n)_{n\in \mathbb N}\in X$$
and call these functions of product type 
(see Section \ref{sec:producttype}), where $f_n:\mathcal A\to \mathbb R$ 
are fixed functions so that the sum converges.
Although $f$ is given by a sum (possibly a series) the terminology product type 
is convenient because the function $g=\exp(f)$ appearing in the Ruelle operator   
can be naturally represented by a product (possibly an infinity product).

Given a function $f:X\to \mathbb R$ the Ruelle operator
\begin{equation}\label{eq:ruelle}
\mathcal L_f\phi(x)= \sum_{T(y)=x} \phi(y)\exp{f(y)}
\end{equation}
acts on bounded measurable functions if $\mathcal L_f(1)(x)<\infty$ for all $x\in X$.

The initial motivation for the present note was to show the existence of  positive measurable eigenfunctions of  $\mathcal{L}_f$ and obtain criteria for its continuity (see \cite{Wal01} for details). In Section \ref{notB}, we show that, for a continuous potential $f$ less regular than a Bowen potential, the eigenfunction might oscillate between $0$ and $\infty$ on any open set  (see  Theorem \ref{theorem:oscilation}).

If $f$ is of product type, the function $g=e^f$ appearing in the Ruelle operator  (\ref{eq:ruelle})  has indeed a product structure.  It is not hard to see that $\mathcal{L}_f$ and its dual act on functions with a product structure and product measures, respectively.
These basic observations permit  explicit representations of eigenfunctions, conformal measures and equilibrium states (which are of possible  interest in connection with computer experiments or applications in  mathematical physics). There are examples of potentials of product type which belong to Bowen's and Walters' class (see  \cite{Wal01,Wal05}), but also examples having less regularity properties than potentials in these two classes.

We consider the following classes of potentials of product type.
We say that $g=e^f$ is \emph{$\ell_1$-bounded} if $(\|f_k\|_\infty)_{k\ge 2}\in \ell_1$, i.e. $\sum_{k=2}^\infty \|f_k\|_\infty < \infty $ and is
\emph{summable} if $\sum_{a \in \mathcal{A}} \exp({f_1(a)})< \infty$. Moreover, $g$ is a \emph{balanced potential}, if $\sum_{a \in \mathcal{A}} f_k(a)=0$ for all $k\ge 1$. Note that the first condition is equivalent to the condition that  $g(x_1, x_2 \ldots)/\exp({f_1(x_1)})$ is uniformly bounded. Combined with summability, this implies that $\|\mathcal{L}_{\log g}(1)\|_\infty < \infty$, independently of $\mathcal A$ being finite or not. A balanced potential may be considered as a kind of
normal form for potentials of product type.

These conditions on potentials of product type  can be used to describe the properties of the corresponding Ruelle operator. We obtain the following results for the existence of conformal and equilibrium measures under rather weak assumptions. If $\|g\|_\infty < \infty$, then there is an explicitly given product measure which is
$1/g$-conformal (Theorem \ref{theo:existence}). Furthermore, if $g$ is summable and $\ell_1$-bounded, then there exists an explicitly given Bernoulli measure which is an equilibrium state.

In order to obtain uniqueness of these measures, we have to impose Bowen's condition.  
We say that $g=e^f$ is in \emph{Bowen's class}  
if $\log g$ is of locally bounded distortion
(see Proposition \ref{prop_Walters_Bowen_Yuri}). 
That is, there exists $k \in\mathbb N$, referred to as index, such that
\[
\sum_{m=k}^\infty  
\sum^\infty_{n=m}  
\sup \{|f_n (a) - f_n(b)| : a,b \in \mathcal{A}\} 
< \infty.
\]
If Bowen's condition holds for $k=2$, observe that a summable and balanced potential automatically is locally bounded.
Under these assumptions, we show that there exists an explicitly given continuous eigenfunction of $\mathcal{L}_{\log g}$ (Theorem \ref{theo:eigenfunction-abstract}) and, if $\mathcal A$ is finite, that the conformal measure and the equilibrium state are unique (Theorems \ref{theo:uniqueness}, \ref{theo:equilibrium}).

Beyond Bowen's condition, the situation is very different.   If $\mathcal{A}$ is finite and for some $k$,
\begin{equation}\label{eq:intermediate}
\sum_{i=k}^\infty \max_{a \in \mathcal{A}} \bigg({ \sum_{j=i}^\infty }\log g_j(a)\bigg)^2< \infty,
\end{equation}
there are three canonical measures, first the conformal measure 
$\mu$ for $1/g$, secondly the equilibrium measure $\tilde \mu$ and 
last the measure of maximal entropy $\rho$. 
All three measures are Bernoulli (i.e. the coordinate process is independent) 
and $\mu$ and $\tilde{\mu}$ are absolutely continuous with respect to each other.
Moreover, there exist functions $h_\mu \in L^1(X,\mu)$ and $h_\rho \in L^1(X,\rho)$ 
which may exist only almost surely, but these functions are 
eigenfunctions for the action of the operator on $L^1(X,\mu)$ and $L^p(X,\rho)$ (for $1\leq p < \infty$), respectively. 
The relationship between $h$ and the equilibrium measure is
explained by ergodicity of a natural operator on $L^1(X,\rho)$ defined by $\tilde \mu$.

In order to illustrate the results we will study an explicit example in 
Section \ref{exa}. In there, we consider the potential $f:\{-1,1\}^\mathbb{N} \to \mathbb{R}$ of the form
$$ f(x_1,x_2,...) = x_1 + 2^{-\gamma} \, x_2 + 3^{-\gamma} \, x_3 + ...+n^{-\gamma} \, x_n + ...$$
which is a summable, locally bounded and balanced potential for $\gamma >1$.
If $\gamma>2$, then $e^f$ is in Bowen's class, and for $3/2 < \gamma  \leq 2$, condition \eqref{eq:intermediate} is satisfied.
For the latter case, we obtain that $h_\mu$ and $h_\rho$ are locally unbounded and therefore discontinuous. Furthermore, for  $1<\gamma\leq 3/2$, 
these eigenfunctions do not exist and the measures  $\mu$, $\tilde \mu$ and $\rho$ 
are pairwise singular.

The paper is structured as follows.
In Section \ref{sec:topology}, we recall the regularity classes of  Bowen,
Walters and Yuri adapted to the setting of potentials of product type.
In our setting, the classes of Bowen and Walters coincide, and in particular,
the existence of conformal measures and continuous eigenfunctions
for finite $\mathcal A$ could also be obtained by
results in \cite{Wal01}.
For completeness, we give conditions for a
potential of product type to be in Yuri's class, although we do not prove results under this
regularity hypothesis in this paper.
This is due to the fact that the results by Yuri rely on a tower construction whose associated potential is of bounded variation - or, from a more abstract viewpoint,
on the existence of an isolated critical set or isolated indifferent fixed points.

In Section \ref{sec:producttype}, we then provide a very general condition for the existence of a conformal measure and a condition for uniqueness. These results essentially rely on the observation that the Radon-Nikodym derivative of a measure of product type is a function of product type, and an ergodicity argument, respectively.
In Section \ref{sec:Eigenfunctions of product type}, we explicitly construct eigenfunctions and equilibrium states, including the existence only $\rho$-almost everywhere when Bowen's condition is not satisfied and where $\rho$ denotes the measure of maximal entropy. This is extended in the following section to the action of the Ruelle operator on $L^1(X,\rho')$ for certain product measures and a condition for the uniqueness of $h$ is given. Section \ref{exa} is then dedicated to the analysis of the above mentioned example.

\section{Regularity classes of potentials} \label{sec:topology}
In order to adapt the conditions by Bowen, Walters and Yuri to functions of product type we begin specifying a metric on $X=\mathcal A^{\mathbb N}$.
For $(x_n), (y_n) \in X$, let
$$ d(x,y)= 2^{-\max \{n: x_k =y_k \forall k \leq n\}}. $$
As it is well known, $d$ generates the product topology of pointwise 
convergence and $(X,d)$ is a complete metric space which is compact  if and only if $\mathcal{A}$ is finite. 
The cylinder sets form a basis of this topology, where,
for a $k$-word $(x_1, \ldots , x_k) \in \mathcal{A}^k$, the associated  cylinder set is
defined by $[x_1, \ldots , x_k] := \{(y_n)_{n\ge 1} \in X: y_i =x_i \forall i=1,\ldots, k\}$.

The shift on $X$ is defined by $T: X \to X, (x_1,x_2,\ldots)\mapsto (x_2,\ldots)$ and, as it is well known, is a continuous transformation which expands distances by 2. In order to put emphasis on the underlying topology and Borel $\sigma$-algebra, we will refer to $(X,T)$ as a \emph{topological Bernoulli} shift over the alphabet $\mathcal{A}$.

Using a slightly different notation as in \cite{Wal01}, for a function $\phi:X\to \mathbb R$  we let
$$\mathrm{var}_{n}(\phi) :=  \sup \{ |\phi(x)-\phi(y)|: d(x,y)\leq 2^{-n}\} $$
denote the variation of $\phi$ over cylinders of length $n$.
Then $\phi$ has summable variations  (\cite{Wal75a}) if
$$ \sum_{n=1}^\infty \mathrm{var}_n(\phi)<\infty.$$
To simplify the notation we write $S_n(\phi)=\phi+...+\phi\circ T^{n-1}$. 
In the sequel we define some regularity classes in terms of the decay 
of $\mathrm{var}_{n}(\cdot)$. 
We say that a function $\phi: X \to \mathbb R$ belongs to
\begin{enumerate}
	\item  {Walters' class} (\cite{Wal01}) if $ \lim_{k \to \infty} \sup_{n\in\mathbb{N}}\ \mathrm{var}_{n+k}(S_n(\phi)) =0$,
	\item  {Bowen's class} (\cite{Bowen1974}) if $\exists \;k\in\mathbb{N} $ such that
	$ \sup_{n\in\mathbb{N}}\ \mathrm{var}_{n + k}(S_n(\phi)) < \infty $,
	\item {Yuri's class} (\cite{Yur98}) if
	$ \lim_{n \to \infty} \frac{1}{n}\mathrm{var}_{n}(S_n(\phi)) =0$.\end{enumerate}
It has been remarked in \cite{Wal01} that the definition of Bowen's class given 
here is equivalent to Bowen's original definition.
Observe that for shift spaces, Walters' condition is equivalent to equicontinuity of the family $\{S_n(\phi)): n\ge 1\}$, whereas Bowen's condition provides a uniform local bound on the local distortion of $(S_n(\phi))_{n\ge 1}$. Yuri's condition is also known as weak bounded variation (\cite{Yur98}). We now deduce necessary conditions  for functions of product type to belong to these classes.  Assume that $f:X \to \mathbb R$ is of the form
\[ f((x_n)_{n\ge 1}) = \sum_{n=1}^\infty f_n(x_n), \]
where $(f_n: \mathcal A \to \mathbb R)_{n\ge 1}$ is a sequence such that $\sum_n  f_n(x_n)$ converges for all $x =(x_n)_{n\ge 1}\in X$ and set
\[ v_{n}(f) := \sup \{|f_n (a) - f_n(b)| : a,b \in \mathcal A\}, \quad s_n(f) := \sum_{k>n} v_k(f).\]

\begin{proposition}\label{prop_Walters_Bowen_Yuri} For $f$  of product type as above, the following holds.
	\begin{enumerate}
		\item If \, $\sum_{n=1}^\infty s_n(f)<\infty$ then $f$ has summable variation.
		\item  If \, $\sum_{n=k}^\infty s_n(f) < \infty$ for some $k \in \mathbb N$, then $f$ belongs to Bowen's and Walters' class.
		\item  If \, $\lim_{m \to \infty}  \frac{1}{m} \sum_{n=1}^m s_n(f)=0$, then $f$ belongs to Yuri's class.
	\end{enumerate}
\end{proposition}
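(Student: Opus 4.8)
The plan is to reduce all three statements to the single elementary observation that a potential $f$ of product type satisfies $\mathrm{var}_n(f)\leq s_n(f)$ for every $n$. Indeed, if $d(x,y)\leq 2^{-n}$ then $x_k=y_k$ for all $k\leq n$, so $f(x)-f(y)=\sum_{k>n}\bigl(f_k(x_k)-f_k(y_k)\bigr)$, whence $|f(x)-f(y)|\leq\sum_{k>n}v_k(f)=s_n(f)$. Part (1) is then immediate: summing this bound gives $\sum_{n\geq1}\mathrm{var}_n(f)\leq\sum_{n\geq1}s_n(f)<\infty$, so $f$ has summable variation.

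For (2) and (3) I would next record the standard Birkhoff-sum estimate for an expanding map. Since $T$ expands distances by $2$ we have $d(T^jx,T^jy)\leq 2^j d(x,y)$, hence $\mathrm{var}_m(f\circ T^j)\leq\mathrm{var}_{m-j}(f)$ whenever $0\leq j<m$. Summing over $j=0,\dots,n-1$ and applying the inequality of the first paragraph gives, for every $m\geq n$,
\[
\mathrm{var}_m\bigl(S_n(f)\bigr)\ \leq\ \sum_{j=0}^{n-1}\mathrm{var}_{m-j}(f)\ \leq\ \sum_{j=0}^{n-1}s_{m-j}(f)\ =\ \sum_{i=m-n+1}^{m}s_i(f),
\]
and from this single inequality the remaining claims drop out. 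For (2): taking $m=n+k$ gives $\mathrm{var}_{n+k}(S_n(f))\leq\sum_{i=k+1}^{n+k}s_i(f)\leq\sum_{i=k}^{\infty}s_i(f)<\infty$ uniformly in $n$, which is Bowen's condition; and for any $K\geq k$ the same bound yields $\sup_n\mathrm{var}_{n+K}(S_n(f))\leq\sum_{i=K+1}^{\infty}s_i(f)$, a tail of the convergent series $\sum_{i\geq k}s_i(f)$, so this supremum tends to $0$ as $K\to\infty$, which is Walters' condition. For (3): taking $m=n$ gives $\mathrm{var}_n(S_n(f))\leq\sum_{i=1}^{n}s_i(f)$, hence $\tfrac1n\mathrm{var}_n(S_n(f))\leq\tfrac1n\sum_{i=1}^{n}s_i(f)\to0$ by hypothesis, which is Yuri's condition.

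I do not expect a real obstacle: all the content lies in the bound $\mathrm{var}_n(f)\leq s_n(f)$ together with the Birkhoff-sum estimate, and the only place requiring attention is the index bookkeeping in the displayed inequality — in particular keeping $0\leq j<m$, which is what produces the index shifts in the statements. (Should one instead prefer to expand $\mathrm{var}_m(S_n(f))$ directly, using that $S_n(f)$ is again of product type with coordinate functions $F_\ell=\sum_{i=\max(1,\,\ell-n+1)}^{\ell}f_i$, one would have to treat separately the boundary term $n\cdot s_{n+k}(f)$, which is handled by the elementary fact that a nonincreasing summable sequence $(s_n)$ satisfies $n\,s_n\to0$.)
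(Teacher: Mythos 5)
Your argument is correct and is essentially the paper's own proof: the paper obtains the identical bound $\mathrm{var}_{n+k}(S_n(f))\leq\sum_{l=1}^{n}s_{l+k}(f)$ by expanding the double sum $\sum_{j=0}^{n-1}\sum_{i}(f_i(x_{i+j})-f_i(y_{i+j}))$ and cancelling the coordinates that agree, which is just your estimate $\mathrm{var}_m(f\circ T^j)\leq\mathrm{var}_{m-j}(f)\leq s_{m-j}(f)$ carried out in one step. The deductions of the Bowen, Walters and Yuri conditions from that bound, and of part (1) from $\mathrm{var}_n(f)\leq s_n(f)$, match the paper's.
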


\begin{proof} For $x = (x_n)_{n\ge 1}$, $y = (y_n)_{n\ge 1}$ with $x_j=y_j$ for all $j \leq m+k$, it follows that
	\begin{align*}
	\left|S_m(f)(x) - S_m(f)(y) \right| & = \left|\sum_{j=0}^{m-1}  \sum_{n=1}^\infty  f_n(x_{n+j}) - f_n(y_{n+j})\right| \\
	& =   \left| \sum_{j=0}^{m-1}  \sum_{n=m-j+k+1}^\infty   f_n(x_{n+j}) - f_n(y_{n+j})\right|\\
	& \leq  \sum_{l=1}^{m}  \sum_{n=l+k+1}^\infty   v_{n}(f) = \sum_{l=1}^{m} s_{l+k}(f).
	\end{align*}
	Hence, $\mathrm{var}_{m+k}(S_m(f)) \leq \sum_{l=1}^{m} s_{l+k}(f) \leq \sum_{l> k } s_{l}(f)$. 
	Assertions 2 and 3 easily follow from this estimate. The assertion 1 is shown similarly.
\end{proof}

\begin{example} Assume that  $\|f_n\|_\infty \ll n^{-\gamma}$  for  some $\gamma >1$, where  $a_n \ll b_n$ stands for the existence of $C>0$ with $a_n \leq C b_n$ for all $n \in \mathbb N$. As $\gamma>1$, it follows that $\sum_n \|f_n\|_\infty < \infty$. Moreover, the estimate $v_{n}(f) \leq 2 \|f_n\|_\infty \leq 2 n^{-\gamma}$ implies that $s_n(f) \ll n^{1-\gamma}$. In particular, $\sum_{n=1}^m s_n(f)  \ll n^{2-\gamma}$.
	Hence, if $\gamma >2$, then $f$ has summable variation and is in Bowen's and Walters' class, and if $\gamma >1$, then $f$ is in Yuri's class.
\end{example}

\begin{example} \label{ex:simplified-dyson}
	In order to see that this classification through $\gamma$ is sharp, we consider the specific example $f:\{-1,1\}^\mathbb{N} \to \mathbb{R}$ of the form $ f(x) = \sum_n x_n n^{-\gamma}$.
	Then, for $x=(x_n)_{n\ge 1}$ and $y=(y_n)_{n\ge 1}$ with $x_j=y_j$ for all $j \leq m+k$ and $x_j =1$ and  $y_j = - 1$ for all $j > m+k$,
	one obtains as in the proof of Proposition \ref{prop_Walters_Bowen_Yuri} that, for $\gamma \neq 2$,
	\begin{align*}
	S_m(f)(x) - S_m(f)(y) & = \sum_{l=1}^{m}  \sum_{n>l+k}^\infty   f_n(1) - f_n(-1) = 2 \sum_{l=1}^{m}  \sum_{n>l+k}
	n^{-\gamma}\\
	& \gg   \sum_{l=1}^m (l+k+1)^{1-\gamma} \gg \left|  (k+2)^{2-\gamma} - (m+k+2)^{2-\gamma}\right|.
	\end{align*}
	By the same argument, it follows that $S_m(f)(x) - S_m(f)(y) \gg \log(m+k+2) -\log (k+2)$ for $\gamma = 2$.
	Hence, for this particular choice of $f$, it follows that $f$ is in Bowen's or Walters' class if and only if $\gamma>2$. Furthermore, $f$ is in Yuri's class if and only if $ \gamma>1$.
\end{example}

\section{Conformal measures of product type} \label{sec:producttype}

\subsection{Existence}
Conformal measures are used to denote the existence  of probability 
measures $\mu$ with a prescribed Jacobian $J = d\mu \circ T / d\mu$. 
In this section we study their existence and uniqueness for a 
given potential $f$ of product type, where the Jacobian 
is given by $J= e^{-f}$. Hence if $g:X\to \mathbb R_+$ is a 
given positive  function (also called a potential), 
$f =\log g$ is the potential for the associated Ruelle operator 
$\mathcal{L}_f$   (see below),
and $g$ is said to be of product type if the associated $f$ is of this type,
in particular, $g$ can be written in the form
$g(x)=\prod_{n=1}^\infty g_n(x_n)$ ($x=(x_n)_{n\ge 1}$),
where the $g_n$ are uniquely determined up to non-zero constants. 
In analogy to product type functions,
we also call a product measure $\mu=\otimes_{i=1}^\infty \mu_i$ on
$X=\mathcal A^{\mathbb N}$  a measure of product type,
where $\mu_i$ are probability measures on $\mathcal A$.
Such probability measures are sometimes called Bernoulli measure.
These product measures are uniquely defined by their  values on cylinders:
$$
\mu([a_1,\ldots, a_n]) =\prod_{i=1}^n \mu_i(a_i)\qquad a_1,...,a_n\in\mathcal A.
$$
Recall from \cite{DU91} that a Borel probability measure
$\mu$ on $(X,\mathcal{B})$ is $\phi$-conformal if there exists $\lambda >0$,
$$
\mu(T(A)) =  \lambda \int_A \phi\, d\mu
$$
for all measurable sets $A$ such that the shift map $T:X \to X$ restricted to $A$ is injective. If the Ruelle operator  $\mathcal{L}_{- \log \phi}$ acts on continuous functions its dual operator  also   acts on finite signed measures, and it is well known that  a measure $\mu$ is $\phi$-conformal if and only if
$\mathcal{L}^\ast_{- \log \phi}(\mu) = \lambda \mu$, for some $\lambda>0$. Also note that $\lambda$ usually is equal to the spectral radius of $\mathcal{L}_{- \log \phi}$.

\begin{theorem} \label{theo:existence}
	Let $(X,T)$ be a topological Bernoulli shift over a finite
	or countable alphabet $\mathcal A$ and let $g=\prod_{n=1}^\infty g_n$ be a potential
	of product type.
	\begin{enumerate}
		\item \label{cor:conf_measure_formula}
		There exists at most one conformal measure $\mu$
		of product type for $g$ which is positive on open sets. This measure $\mu$ is given by
		\begin{equation}\label{eq:conformal}
		\mu_n(a) = \left( \sum_{b \in \mathcal{A}} \prod_{i=1}^n \frac{g_i(a)}{g_i(b)} \right)^{-1} \hbox{ for all } n \in \mathbb N, \; a \in \mathcal{A}.
		\end{equation}
		\item If $\inf_{x\in X} g(x) >0$, then a conformal
		measure of product type for $g$
		exists and is positive on open sets.
	\end{enumerate}
\end{theorem}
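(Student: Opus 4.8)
The plan is to read the one‑dimensional marginals of a product‑type conformal measure straight off the eigenvalue equation — this proves uniqueness and produces the formula at the same time — and then, for the existence half, to run the same computation in reverse to check that \eqref{eq:conformal} does define a conformal product measure once $\inf g>0$.

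\textbf{Uniqueness and the formula.} Suppose $\mu=\bigotimes_{n\ge 1}\mu_n$ is a conformal measure of product type for $g$ that is positive on open sets; positivity on cylinders is exactly the condition $\mu_n(a)>0$ for all $n$ and all $a\in\mathcal A$. By the characterisation recalled above, conformality means $\mathcal L^{\ast}_{-\log g}\mu=\lambda\mu$ for some $\lambda>0$, i.e. $\int\mathcal L_{-\log g}\varphi\,d\mu=\lambda\int\varphi\,d\mu$ for every continuous $\varphi$. I would test this on $\varphi=\mathbf 1_{[a_1,\dots,a_n]}$. Since $\mathcal L_{-\log g}\varphi(x)=\sum_{b\in\mathcal A}\varphi(bx)/g(bx)$ and only the branch $b=a_1$ survives, one gets $\mathcal L_{-\log g}\varphi(x)=\mathbf 1_{[a_2,\dots,a_n]}(x)/g(a_1x)$; the product structures $g(a_1x)=g_1(a_1)\prod_{m\ge 1}g_{m+1}(x_m)$ and $\mu=\bigotimes_n\mu_n$ then make $\int\mathcal L_{-\log g}\varphi\,d\mu$ factorise over coordinates. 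Comparing the resulting expression for the word $(a_1,\dots,a_{n-1},a)$ with the one for $(a_1,\dots,a_{n-1},b)$, every factor not carrying the last coordinate cancels and one is left, for $n\ge 2$, with
\[
\frac{\mu_n(a)}{\mu_n(b)}=\frac{\mu_{n-1}(a)\,g_n(b)}{\mu_{n-1}(b)\,g_n(a)},
\]
while $n=1$ gives $\mu_1(a)/\mu_1(b)=g_1(b)/g_1(a)$. By induction $a\mapsto \mu_n(a)\prod_{i=1}^n g_i(a)$ is constant on $\mathcal A$; imposing $\sum_{a\in\mathcal A}\mu_n(a)=1$ then forces $\mu_n(a)=\bigl(\sum_{b}\prod_{i=1}^n g_i(a)/g_i(b)\bigr)^{-1}$, which is \eqref{eq:conformal}. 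In particular the marginals, hence $\mu$, are uniquely determined.

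\textbf{Existence when $\inf g>0$.} Define $\mu_n$ by \eqref{eq:conformal} and put $\mu=\bigotimes_n\mu_n$. Writing $G_n(a):=\prod_{i=1}^n g_i(a)$, one has $\mu_n(a)=G_n(a)^{-1}\big/\sum_b G_n(b)^{-1}$, so the first thing to check is that $0<\sum_b G_n(b)^{-1}<\infty$. Here $\inf_{x}g(x)>0$ together with the fact that each $\prod_{i\ge 1}g_i(a)=\exp(\sum_i f_i(a))$ converges to a positive number ($f$ being of product type) forces the partial products $G_n(a)$ to be bounded below by a positive constant uniformly in $n$ and $a$; for finite $\mathcal A$ this immediately gives $0<\sum_b G_n(b)^{-1}<\infty$, and the countable case goes through verbatim once this series is known to converge. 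Thus each $(\mu_n(a))_{a\in\mathcal A}$ is a strictly positive probability vector, $\mu$ is a product probability measure, and it is positive on every cylinder and hence on every open set. It remains to reverse the computation of the first part: substituting $\mu_n(a)=M_n/G_n(a)$ with $M_n=\bigl(\sum_b G_n(b)^{-1}\bigr)^{-1}$ into $\int\mathcal L_{-\log g}\mathbf 1_{[a_1,\dots,a_n]}\,d\mu$ and using the telescoping identity $\sum_c\mu_m(c)/g_{m+1}(c)=M_m/M_{m+1}$ shows this integral equals $\lambda\,\mu([a_1,\dots,a_n])$ with $\lambda=\lim_{n\to\infty}M_n^{-1}=\lim_n\sum_b G_n(b)^{-1}$, a finite positive number; since cylinder indicators are enough to identify the measures involved, $\mathcal L^{\ast}_{-\log g}\mu=\lambda\mu$, i.e. $\mu$ is conformal for $g$.

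The computations themselves are routine. The point that needs care is the coordinate bookkeeping in the factorisation of $\int\mathcal L_{-\log g}\varphi\,d\mu$ — tracking which $g_i$ and which $\mu_i$ attach to which coordinate after composition with the shift, and the telescoping of the tail product $\prod_{m\ge n}\sum_b\mu_m(b)/g_{m+1}(b)$. The only genuinely substantive issue is convergence of the series $\sum_b\prod_{i=1}^n g_i(a)/g_i(b)$ (for \eqref{eq:conformal} to make sense) and of $\lim_n\sum_b G_n(b)^{-1}$ (for the eigenvalue), and this is exactly where the hypothesis $\inf g>0$ is used.
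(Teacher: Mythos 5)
Your proof is correct and follows essentially the same route as the paper's: you extract the marginal recursion $\mu_n(a)/\mu_n(b)=\mu_{n-1}(a)g_n(b)/(\mu_{n-1}(b)g_n(a))$ from the conformality identity on cylinder sets (phrased via $\mathcal L^{\ast}_{-\log g}\mu=\lambda\mu$ rather than $\mu(T(A))=\lambda\int_A g\,d\mu$, an equivalence the paper records just before the theorem), conclude by induction and normalisation, and then reverse the computation for existence, exactly as in the paper. The only place where you are no more rigorous than the paper itself is the convergence of $\sum_{b}\prod_{i=1}^n g_i(b)^{-1}$ when $\mathcal A$ is countably infinite --- the uniform lower bound on the partial products that you derive from $\inf g>0$ bounds each term but does not make the series converge, and the paper's own proof of part (2) glosses over the same point --- so for finite $\mathcal A$ your argument is complete and matches the paper's.
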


\begin{proof} We begin with the proof of the first assertion.
	Let $\mu=\otimes_{i=1}^\infty \mu_i$ be a product measure
	which is positive on open sets,
	in particular on each cylinder set. In order that
	it is conformal for $g$, that is
	$$
	\mu(T[a_1,\ldots,a_n]) = \mu([a_2,\ldots,a_n]) = \lambda \int_{[a_1,\ldots,a_n]} g(x)\mu(dx)
	$$
	for every cylinder set $[a_1,\ldots,a_n]$,
	it is necessary and sufficient that
	\begin{equation}\label{eq:1}
	1 =\mu(T[a])= \lambda \int_{[a]} g_1(x_1) \mu_1(dx_1) \prod_{i=2}^\infty g_i(y)\mu_i(dy)
	\end{equation}
	for some $\lambda>0$ and
	
	\begin{eqnarray}\label{eq:2}
	\nonumber \mu_1(a_2)\ldots\mu_{n-1}(a_{n}) &=& \mu(T([a_1,\ldots,a_n]))\\
	&=& \lambda \prod_{i=1}^n g_i(a_i)\mu_i(a_i) \prod_{i=n+1}^\infty \int g_i(y)\mu_i(dy)
	\end{eqnarray}
	for any $a_1,\ldots,a_n\in \mathcal{A}$.
	Varying $a\in \mathcal{A}$
	in equation (\ref{eq:1}) yields
	$$
	g_1(a)\mu_1(a) = g_1(b)\mu_1(b)
	$$
	and hence
	\begin{equation}\label{eq:conformal2}
	\mu_1(a) =\left( g_1(a)\sum_{b\in \mathcal{A}} \frac 1{g_1(b)} \right)^{-1}.
	\end{equation}
	The similarly equations (\ref{eq:2}) yield
	\begin{equation}\label{eq:conformal3}
	\mu_n(a)= \frac{\mu_{n-1}(a)}{g_n(a)} \left(\sum_{b\in \mathcal{A}} \frac{\mu_{n-1}(b)}{g_n(b)}\right)^{-1}.
	\end{equation}
	It follows that the conformality equalities (\ref{eq:1}) amd (\ref{eq:2}) uniquely determine
	the conformal measure (which is positive on open sets and a product measure),
	hence the uniqueness of $\mu$. Moreover, by (\ref{eq:2}),
	\[ \frac{\mu_n(a)}{\mu_n(b)} = \frac{g_n(b)}{g_n(a)} \cdot \frac{\mu_{n-1}(a)}{\mu_{n-1}(b)}. \]
	Hence, the first part of the theorem follows  by induction.\\
	
	For the proof of the second part, note that the uniform lower bound on $g$ is equivalent to
	\begin{equation*}\label{eq:3}
	\sum_{i=1}^\infty\log  \|g_i^{-1}\|_\infty^{-1}>-\infty.
	\end{equation*}
	Hence, for any sequence of measures $\mu_i$ on $\mathcal  A$,
	$$
	\int_{X} g(x) \prod_{i=1}^\infty \mu_i(dx) = \prod_{i=1}^\infty \int_{\mathcal{A}} g_i(u) \mu_i(du)
	\ge \prod_{i=1}^\infty \|g^{-1}_i\|_\infty^{-1} >0.
	$$
	Hence the equations (\ref{eq:1}) and (\ref{eq:2})
	show that the conformal product measure is well defined
	and positive on open sets.
\end{proof}

Due to the constructive proof above,
it is possible to obtain explicit expressions for the measure and the
associated parameter $\lambda$.

\begin{corollary} \label{cor:lambda}
	If $\inf_{x} g(x) >0$, then for every
	$t\in \mathbb R$, the function $g(t)= g^t$ satisfies $\inf_{x} g^t(x) >0$
	as well and the conformality parameter $\lambda_t$ satisfies
	$$
	\lambda_t=  \sum_{c\in \mathcal A}
	\frac{1}{\prod_{i=1}^\infty g(t)_i(c)}
	$$
	for all $t$ where the denominator does not vanish.
\end{corollary}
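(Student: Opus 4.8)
The plan is to apply Theorem \ref{theo:existence} to the potential $g^t$ and then unwind the explicit formula \eqref{eq:conformal} for the conformal parameter. First I would observe that $g^t$ is again of product type with factors $g_i^t$, and that the hypothesis $\inf_x g(x)>0$ gives $\sum_i \log\|g_i^{-1}\|_\infty^{-1} > -\infty$; since $\|(g_i^t)^{-1}\|_\infty^{-1} = (\|g_i^{-1}\|_\infty^{-1})^{t}$ when $t\ge 0$ and an analogous bound holds for $t<0$ after interchanging the roles of $g_i$ and $g_i^{-1}$, one checks that $\sum_i \log\|(g_i^t)^{-1}\|_\infty^{-1} > -\infty$, i.e. $\inf_x g^t(x)>0$ as claimed. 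Hence part (2) of Theorem \ref{theo:existence} applies and there is a conformal product measure $\mu^{(t)}$ for $g^t$, positive on open sets, with some conformality parameter $\lambda_t>0$.

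Next I would extract $\lambda_t$ from equation \eqref{eq:1} of the proof of Theorem \ref{theo:existence}, applied to $g^t$. That equation reads $1 = \lambda_t \int_{[a]} g^t(x)\,\mu^{(t)}(dx)$ for each $a\in\mathcal A$; summing over $a\in\mathcal A$ and using that the cylinders $[a]$ partition $X$ gives
\[
1 = \lambda_t \int_X g^t(x)\,\mu^{(t)}(dx) = \lambda_t \prod_{i=1}^\infty \int_{\mathcal A} g_i(u)^t\, \mu^{(t)}_i(du).
\]
Now substitute the explicit formula \eqref{eq:conformal} (with $g$ replaced by $g^t$): $\mu^{(t)}_i(a) = \big(\sum_{b}\prod_{j\le i} g_j(a)^t/g_j(b)^t\big)^{-1}$. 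A direct computation of $\int g_i^t\, d\mu^{(t)}_i = \sum_a g_i(a)^t \mu^{(t)}_i(a)$ should telescope against the analogous expression for index $i-1$, so that the infinite product collapses; carrying this out is the only genuine calculation and I expect it to yield
\[
\prod_{i=1}^\infty \int_{\mathcal A} g_i^t\, d\mu^{(t)}_i = \Big(\sum_{c\in\mathcal A} \prod_{i=1}^\infty g_i(c)^{-t}\Big)^{-1} = \Big(\sum_{c\in\mathcal A} \frac{1}{\prod_{i=1}^\infty g(t)_i(c)}\Big)^{-1},
\]
whence $\lambda_t$ equals the reciprocal, which is the asserted formula.

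The main obstacle is making the telescoping rigorous: one must justify that the partial products $\prod_{i=1}^N \int g_i^t\,d\mu^{(t)}_i$ converge and identify the limit, controlling tails via the summability $\sum_i \log\|g_i^{-1}\|_\infty^{-1} > -\infty$ (and the companion bound for the $\|g_i\|_\infty$ coming from $\|g\|_\infty \le \prod\|g_i\|_\infty$, which is finite under $\inf g>0$ once we also note $g$ bounded is implicit, or argue directly that each factor is finite and the product of the relevant ratios is Cauchy). One also needs the caveat in the statement — "for all $t$ where the denominator does not vanish" — which is exactly the condition that $\sum_c \prod_i g_i(c)^{-t}$ is a well-defined positive (possibly infinite; the formula is vacuous/degenerate otherwise) quantity; when $\mathcal A$ is countable this sum may be infinite, forcing $\lambda_t = 0$, consistent with the convention. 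Once the telescoping identity is established the corollary follows immediately, with no further appeal to dynamics beyond Theorem \ref{theo:existence}.
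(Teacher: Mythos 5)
Your overall strategy is the paper's own: apply Theorem \ref{theo:existence} to $g^t$, substitute the explicit formula \eqref{eq:conformal} into the conformality equation \eqref{eq:1}, and telescope. However, the two displayed identities on which you hang the argument are both false, and your final formula comes out right only because the errors cancel. Equation \eqref{eq:1} states $1=\mu^{(t)}(T[a])=\lambda_t\int_{[a]}g^t\,d\mu^{(t)}$ for \emph{each} $a\in\mathcal A$, since $T[a]=X$; summing over $a$ therefore yields $|\mathcal A|=\lambda_t\int_X g^t\,d\mu^{(t)}$, not $1=\lambda_t\int_X g^t\,d\mu^{(t)}$, and for countable $\mathcal A$ your summed equation reads $\infty=\lambda_t\cdot(\mathrm{finite})$ and is vacuous. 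The telescoping you ``expect'' is off by exactly the same factor: setting $S_N:=\sum_{b\in\mathcal A}\prod_{i=1}^N g_i(b)^{-t}$ (so $S_0=|\mathcal A|$), formula \eqref{eq:conformal} gives $\mu_N^{(t)}(a)=\bigl(\prod_{i\le N}g_i(a)^{-t}\bigr)/S_N$, hence $\int g_N^t\,d\mu^{(t)}_N=S_{N-1}/S_N$ and $\prod_{i=1}^\infty\int g_i^t\,d\mu_i^{(t)}=S_0/S_\infty=|\mathcal A|\bigl(\sum_c\prod_i g_i(c)^{-t}\bigr)^{-1}$, not $\bigl(\sum_c\prod_i g_i(c)^{-t}\bigr)^{-1}$. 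Since this is precisely the ``only genuine calculation'' that you left undone and mis-stated, the proof as written does not stand.

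The repair is short and is what the paper in effect does: do not sum over $a$. Use \eqref{eq:1} for a single fixed $a$, where $\int_{[a]}g^t\,d\mu^{(t)}=g_1(a)^t\mu_1^{(t)}(a)\prod_{i\ge2}\int g_i^t\,d\mu_i^{(t)}=S_1^{-1}\cdot(S_1/S_\infty)=S_\infty^{-1}$, so that $\lambda_t=S_\infty=\sum_c 1/\prod_i g(t)_i(c)$ directly; the convergence $S_N\to S_\infty$ is controlled by $\sum_i\log\|g_i^{-1}\|_\infty^{-1}>-\infty$ as you indicate. Two minor further points: your claim that $\inf_x g^t(x)>0$ for $t<0$ genuinely requires $\sup_x g(x)<\infty$, which does not follow from $\inf_x g(x)>0$ alone --- you flag this, but ``$g$ bounded is implicit'' is not an argument (the paper's proof silently skips this too); and the caveat about the denominator concerns the possible vanishing of the infinite products $\prod_i g(t)_i(c)$, not only the divergence of the sum over $c$.
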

\begin{proof} We may put $t=1$. Inserting (\ref{eq:conformal})  into equation (\ref{eq:1}) yields
	$$ 1= \lambda \left(\sum_{b\in \mathcal A} g_1(b)^{-1}\right)^{-1}\prod_{i=2}^\infty \int g_i(u) \mu_i(du).$$
	Now by equation (\ref{eq:conformal})
	$$
	\int g_n d\mu_n=\sum_{b\in \mathcal A} g_n(b) \mu_n(b)
	=\left(\sum_{b\in \mathcal A} \frac {\mu_{n-1}(b)}{g_n(b)}\right)^{-1}
	$$
	and by bachward induction over $m$
	$$
	\int g_nd\mu_n\ldots\int g_{m-1} d\mu_{m-1}=
	\left(	\sum_{b\in \mathcal A}	\frac{\mu_{m-1}(b)}{g_n(b)\ldots g_m(b)}\right)^{-1}\int g_{m-1}d\mu_{m-1}.
	$$
	Using (\ref{eq:conformal3})
	$$
	g_{m-1}(c)\mu_{m-1}(c)
	=
	\mu_{m-2}(c)
	\frac{\mu_{m-1}(b)g_{m-1}(b)}{\mu_{m-2}(b)}
	\quad \forall\ b\in \mathcal{A}
	$$
	and summing over $c$  it follows that
	$$
	\int g_{m-1} d\mu_{m-1}
	=
	\frac{\mu_{m-1}(b)g_{m-1}(b)}{\mu_{m-2}(b)},
	$$
	so the following identity holds
	\begin{align*}
	\int g_nd\mu_n\ldots\int g_{m-1}d\mu_{m-1}
	&=
	\left(
	\sum_{b\in \mathcal A}
	\frac{\mu_{m-1}(b)}{g_n(b)\ldots g_m(b)}
	\frac {\mu_{m-2}(b)}{\mu_{m-1}(b)g_{m-1}(b)}
	\right)^{-1}
	\\
	&=
	\left(
	\sum_{b\in A}
	\frac{\mu_{m-2}(b)}{g_n(b)\ldots g_{m-1}(b)}
	\right)^{-1}.
	\end{align*}
	Taking $m=3$ it follows that
	$$ \int g_nd\mu_n...\int g_2 dm_2= \left(\sum_{b\in \mathcal A} \frac{\mu_1(b)}{g_n(b)...g_2(b)}\right)^{-1}.$$
	Since by (\ref{eq:conformal2})
	$$ \sum_{c\in \mathcal A} g_1(c)^{-1} = \frac 1{\mu_1(b) g_1(b)}$$
	for every $b\in \mathcal A$ we obtain
	$$\sum_{c\in\mathcal A} \frac 1{g_1(c)}\sum_{b\in \mathcal A} \frac{\mu_1(b)}{g_n(b)...g_2(b)}= \sum_{b\in \mathcal A} \frac 1{g_n(b)...g_1(b)},$$
	and therefore the claim follows by taking $n\to \infty$.
\end{proof}

\subsection{Uniqueness}

Uniqueness of conformal measures requires a stronger hypothesis. We prove
\begin{theorem}\label{theo:uniqueness}
	Let  the alphabet $\mathcal A$ be finite and suppose that $g_i:X\to \mathbb R_+$ ($i\ge 0,\ g_0$ a constant) satisfy
	\begin{equation}\label{eq:conformal4}
	\sum_{i=0}^\infty \sum_{k=i}^\infty \log \max\{\|g_k\|_\infty,\|g_k^{-1}\|_\infty\}<\infty.
	\end{equation}
	Then there exists exactly one conformal measure for the product type function
	$g(x) =g_0 \prod_{i=1}^\infty g_i(x_i)$. Moreover, this measure is ergodic.
\end{theorem}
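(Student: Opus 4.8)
The plan is to derive the statement from Theorem~\ref{theo:existence} by an ergodicity argument for product measures, once two consequences of \eqref{eq:conformal4} are in hand: that $0<\inf_X g\le\sup_X g<\infty$, and that $f:=\log g$ has summable variations. The first is immediate from the $i=0$ term $\sum_{k\ge0}\log\max\{\|g_k\|_\infty,\|g_k^{-1}\|_\infty\}<\infty$. For the second, $v_k(f)\le 2\log\max\{\|g_k\|_\infty,\|g_k^{-1}\|_\infty\}$, whence $\sum_{n\ge1}s_n(f)\le 2\sum_{i\ge0}\sum_{k\ge i}\log\max\{\|g_k\|_\infty,\|g_k^{-1}\|_\infty\}<\infty$; combined with the estimate $\mathrm{var}_m(S_m f)\le\sum_{l\ge1}s_l(f)$ from the proof of Proposition~\ref{prop_Walters_Bowen_Yuri}, this gives $B:=\sup_{n\ge1}\mathrm{var}_n(S_n f)<\infty$. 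Since $\inf_X g>0$, Theorem~\ref{theo:existence} provides a product-type conformal measure $\mu=\otimes_{i\ge1}\mu_i$ for $g$, positive on open sets, with some parameter $\lambda>0$ and given by \eqref{eq:conformal}. It then remains to show that every conformal probability measure coincides with $\mu$ and that $\mu$ is ergodic.

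The first step is a uniform two-sided bound on cylinder masses. If $\nu$ is conformal for $g$ with parameter $\Lambda$, iterating the conformality identity $\nu(TA)=\Lambda\int_A g\,d\nu$ (valid for $A$ on which $T$ is injective) yields $\nu(T^nA)=\Lambda^n\int_A e^{S_n f}\,d\nu$ whenever $T^n|_A$ is injective; taking $A=[x_1,\dots,x_n]$ (on which $T^n$ is injective, the shift being full, with $T^nA=X$) gives $\int_{[x_1,\dots,x_n]}e^{S_n f}\,d\nu=\Lambda^{-n}$. Because all points of $[x_1,\dots,x_n]$ lie within distance $2^{-n}$ and $\mathrm{var}_n(S_n f)\le B$, it follows that $e^{-B}\Lambda^{-n}e^{-S_n f(w)}\le\nu([x_1,\dots,x_n])\le e^{B}\Lambda^{-n}e^{-S_n f(w)}$ for any $w\in[x_1,\dots,x_n]$, uniformly in $n$ and in the word. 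Comparing with the same bound for $\mu$ (same $w$, parameter $\lambda$) gives $e^{-2B}(\lambda/\Lambda)^n\le\nu([x_1,\dots,x_n])/\mu([x_1,\dots,x_n])\le e^{2B}(\lambda/\Lambda)^n$; summing over $\mathcal A^n$ and using $\nu(X)=\mu(X)=1$ forces $e^{-2B}\le(\lambda/\Lambda)^n\le e^{2B}$ for all $n$, hence $\Lambda=\lambda$, and then $e^{-2B}\le\nu([x_1,\dots,x_n])/\mu([x_1,\dots,x_n])\le e^{2B}$ for every word. By the martingale convergence theorem the bounded nonnegative $\mu$-martingale $\big(x\mapsto\nu([x_1,\dots,x_n])/\mu([x_1,\dots,x_n])\big)_n$ converges $\mu$-almost everywhere to a density $D:=d\nu/d\mu$ with values in $[e^{-2B},e^{2B}]$, so $\nu$ and $\mu$ are mutually absolutely continuous.

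The second step uses ergodicity. Fix $a\in\mathcal A$ and a measurable $A\subseteq[a]$, so that $T|_A$ is injective. On one hand, $\int_{TA}D\,d\mu=\nu(TA)=\lambda\int_A g\,d\nu=\lambda\int_A gD\,d\mu$; on the other hand, the change-of-variables formula implied by conformality of $\mu$, namely $\int_{TA}\psi\,d\mu=\lambda\int_A(\psi\circ T)\,g\,d\mu$, applied with $\psi=D$ gives $\int_{TA}D\,d\mu=\lambda\int_A(D\circ T)\,g\,d\mu$. Comparing these and using $g>0$ yields $D\circ T=D$ $\mu$-a.e.\ on $[a]$, hence $\mu$-a.e. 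Now $\mu=\otimes_{i\ge1}\mu_i$ is a product measure, so the coordinates are $\mu$-independent and, by Kolmogorov's zero-one law, the tail $\sigma$-algebra $\bigcap_n\sigma(x_{n+1},x_{n+2},\dots)$ is $\mu$-trivial; since any shift-invariant set $E$ satisfies $E=T^{-n}E\in\sigma(x_{n+1},x_{n+2},\dots)$ for every $n$, the shift-invariant $\sigma$-algebra is contained in the tail $\sigma$-algebra, and $\mu$ is ergodic. Consequently the invariant density $D$ is $\mu$-a.e.\ constant, and since $\int D\,d\mu=\nu(X)=1=\mu(X)$ that constant is $1$; hence $\nu=\mu$. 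This gives uniqueness, and ergodicity of the conformal measure was obtained along the way.

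The delicate point is the interaction between the two steps: the $T$-invariance of the density $D$ rests on $\mu$ and $\nu$ carrying the \emph{same} Jacobian $\lambda g$, which presupposes that all conformal measures share the parameter $\lambda$; establishing that equality is exactly where the uniform cylinder estimate---hence the regularity $B<\infty$ supplied by \eqref{eq:conformal4}---is really needed, the remaining steps being routine measure theory. (Alternatively, since $\mathcal A$ is finite and $f=\log g$ has summable variations one is in the classical Ruelle--Perron--Frobenius framework, e.g.\ \cite{Wal01}, which yields uniqueness and ergodicity of the eigenmeasure of $\mathcal{L}^{\ast}_{\log g}$ directly; the argument above is the more self-contained route, matching the product-type philosophy of the paper.)
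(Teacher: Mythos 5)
Your proof is correct and reaches the paper's conclusion by a noticeably different route in the two technical steps. For comparing an arbitrary conformal measure $\nu$ with the product-type one $\mu$, the paper exploits the product structure of $g$ directly: it pushes the cylinder identity $\nu([a_2,\ldots,a_{n+1}])=\lambda\prod_{i\le n+1}g_i(a_i)\int_{[a_1,\ldots,a_{n+1}]}\prod_{i\ge n+2}g_i\,d\nu$ through an induction on $n$ with the constants $K_i=\prod_{k\ge i}\max\{\|g_k\|_\infty^2,\|g_k^{-1}\|_\infty^2\}$, obtaining $\prod_iK_i^{-1}\le\mu([w])/\nu([w])\le\prod_iK_i$. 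You instead convert \eqref{eq:conformal4} into the Bowen-type bound $B=\sup_n\mathrm{var}_n(S_nf)<\infty$ and run the classical Gibbs estimate $\nu([w])=e^{\pm B}\Lambda^{-n}e^{-S_nf(w)}$; this has the advantage of forcing every conformal measure to carry the same parameter $\lambda$ --- a point the paper's induction tacitly assumes (its ratio identity only makes sense if $\nu$ and $\mu$ share the same $\lambda$) --- and that equality is precisely what legitimises the $T$-invariance of the density in both arguments, so your version is in fact slightly more careful. For ergodicity, the paper shows that \emph{every} conformal measure is ergodic by restricting to an invariant set and its complement and producing two mutually singular conformal measures, contradicting the equivalence just established; you prove ergodicity of $\mu$ alone via Kolmogorov's zero--one law (invariant sets lie in the tail field of the independent coordinates), which is shorter and is the same device the paper itself uses later in the proof of Theorem \ref{theo:invariant-function-unbalanced}. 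The closing step --- the Radon--Nikodym derivative is $T$-invariant, hence constant, hence equal to $1$ --- is common to both; your appeal to bounded martingale convergence to produce the density replaces the paper's Radon--Nikodym argument but is routine given the uniform two-sided cylinder bounds.
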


\begin{proof}
	Let
	$$ K_i=\prod_{k=i}^\infty \max\{\|g_k\|_\infty^2, \|g_k^{-1}\|_\infty^2\}, \quad i\ge 2 .$$
	Since (\ref{eq:conformal4}) implies the existence condition for a conformal measure of product type, Theorem  \ref{theo:existence},
	guarantees  a  conformal measure for $g$ which is of product type. Denote it by $\mu$ and assume there is another conformal measure $\nu$.
	
	We claim that both measures are equivalent, provided $\frac{\nu([a])}{\mu([a])} \in [K_1^{-1}, K_1]$. In order to show this by induction, assume that for fixed $n\in \mathbb N$ and all cylinder sets $[a_1,\ldots,a_n]$
	$$ \prod_{i=1}^{n+1} K_i^{-1} \le \frac{\mu([a_1,\ldots,a_n])}{\nu([a_1,\ldots,a_n])}\le \prod_{i=1}^{n+1} K_i.$$
	Then for any cylinder $[a_1,\ldots,a_{n+1}]$ we have that
	$$ T([a_1,\ldots,a_{n+1}])= [a_2,\ldots,a_{n+1}]$$
	and hence
	\begin{align*}
	\nu([a_2,\ldots,a_{n+1}])
	&=
	\lambda  \int_{[a_1,\ldots,a_{n+1}]}\prod_{i=1}^\infty g_i(x_i) \nu(dx)
	\\
	&=
	\lambda \prod_{i=1}^{n+1} g_i(a_i)
	\int_{[a_1,\ldots,a_{n+1}]}\prod_{i=n+2}^\infty g_i(x_i) \nu(dx).
	\end{align*}
	The analogue equality holds replacing $\nu$ by $\mu$ and hence
	\begin{align*}
	\frac {\mu([a_1,\ldots,a_{n}])}{\nu([a_1,\ldots,a_{n}])}
	&=
	\frac{\int_{[a_1,\ldots,a_{n+1}]}\prod_{i=n+2}^\infty g_i(x_i) \nu(dx)}
	{\int_{[a_1,\ldots,a_{n+1}]}\prod_{i=n+2}^\infty g_i(x_i) \mu(dx)}
	\\
	&\le
	K_{n+2}\frac{\mu([a_1,\ldots,a_{n+1}])}
	{\nu([a_1,\ldots,a_{n+1}])},
	\end{align*}
	and a similar lower estimate holds interchanging $\mu$ abd $\nu$. This shows that
	\begin{eqnarray*}
		\prod_{i=1}^{n+2} K_i^{-1}&\le& K_{n+2}^{-1}\frac{\mu[a_1,\ldots,a_{n}])}{\nu([a_1,\ldots,a_{n}])} \le \frac{\mu[a_1,\ldots,a_{n+1}])}{\nu([a_1,\ldots,a_{n+1}])} \\
		& \le& K_{n+2} \frac{\mu[a_1,\ldots,a_{n}])}{\nu([a_1,\ldots,a_{n}])}\le \prod_{i=1}^{n+2} K_i.
	\end{eqnarray*}
	Since $K=\prod_{i=1}^\infty K_i <\infty$, the claim is proved.\\
	
	Next we show that a conformal measure $\nu$ satisfies $\nu([a])>0$ for each $a\in \mathcal A$.
	Indeed, let $b\in \mathcal A$ with $\nu([b])>0$. Then for any $a\in \mathcal A$
	$$  \nu([b]) = \nu(T[ab])= \int_{[ab]}  g(x) \mu(dx)$$
	and hence $\nu([a])\ge \nu([ab])>0$ since $g$ does not vanish.
	
	It follows that any two conformal measures are equivalent since $\mathcal A$ is finite.\\
	
	Next we claim that every conformal measure $\nu$ is ergodic: if $A\in \mathcal B$ satisfies $T^{-1}(A)=A$  and $\nu(A)>0$, then it is easy to  see that $\nu(\cdot\cap A)/\mu(A)$ is a conformal measure as well. Then $T^{-1}(A^c)=A^c$ and so $\nu(\cdot\cap A^c)/\nu(A^c)$ is conformal if $\nu(A)<1$. Both measures are singular, contradicting what has been shown so far. Hence $\nu(A)=1$ and $\nu$ is ergodic.\\
	
	Assume now there is another conformal measure $\nu$ which by the previous steps has to be absolutely continuous with  respect to $\mu$. Then there is a function $h>0$, such that,
	$d\nu = h \cdot d\mu$ by the Radon-Nikodym theorem. Since
	\begin{eqnarray*}
		\nu([a_1,\ldots,a_n])&=& \int_{[a_1,\ldots,a_n]} h(x) \mu(dx)\\
		&=&  \lambda \int_{[a,a_1,\ldots,a_n]} h(T(x)) g(x) \mu(dx)\\
		&=& \lambda \int_{[a,a_1,\ldots,a_n]} \frac {h(T(x))}{h(x)} g(x) \nu(dx)
	\end{eqnarray*}
	and
	$$ \nu([a_1,\ldots,a_n])= \lambda \int_{[a,a_1,\ldots,a_n]} g(x) \nu(dx)$$
	we obtain, letting $n\to\infty$ that $\nu$ a.s. $h(T(x))=h(x)$. Now, for every interval $I$ the set
	$A(I)=\{ x\in X: h(x)\in I\}$ is invariant. For each $\eta>0$ there is one interval $I$ of length $\eta$ which has positive measure, hence the conditional measure of $\nu$ restricted to this set $A(I)$ is conformal, and so $\nu(A(I))=1$. Letting the interval shrink to a point $c$ through a sequence of intervals $A(I)$ of measure 1, we see that $h=c $ is constant a.s., finally this implies $c=1$ and $\nu=\mu$.
\end{proof}

\begin{corollary} In case the alphabet is infinite then there is only one conformal measure with
	$$ 0<\inf_{a\in \mathbb N} \frac{\mu([a])}{\nu([a])} $$
	where $\mu$ is the unique conformal measure of product type.
\end{corollary}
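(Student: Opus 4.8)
The plan is to imitate the proof of Theorem~\ref{theo:uniqueness}, letting the hypothesis $c_0:=\inf_{a\in\mathcal A}\mu([a])/\nu([a])>0$ take over the role that finiteness of $\mathcal A$ played there. So let $\nu$ be a conformal measure with $c_0>0$; the aim is to show $\nu=\mu$. First, exactly as in that proof, $\nu$ is positive on cylinders: choosing $b\in\mathcal A$ with $\nu([b])>0$, conformality gives $\nu([b])=\nu(T[ab])=\lambda\int_{[ab]}g\,d\nu$, and since $g>0$ this forces $\nu([ab])>0$, hence $\nu([a])\ge\nu([ab])>0$ for every $a$. Next I would record a comparison on single letters: from $1=\nu(T[a])=\lambda\int_{[a]}g\,d\nu=\lambda g_0 g_1(a)\int_{[a]}\prod_{i\ge2}g_i\,d\nu$ and the bounds $L^{-}\nu([a])\le\int_{[a]}\prod_{i\ge2}g_i\,d\nu\le L^{+}\nu([a])$, where $L^{+}=\prod_{i\ge2}\|g_i\|_\infty$ and $L^{-}=\prod_{i\ge2}\|g_i^{-1}\|_\infty^{-1}$ are finite and positive by~\eqref{eq:conformal4}, one pins $\nu([a])$, and likewise $\mu([a])$ (with the same eigenvalue $\lambda$), into the interval $[(\lambda g_0 g_1(a)L^{+})^{-1},(\lambda g_0 g_1(a)L^{-})^{-1}]$; hence $\mu([a])/\nu([a])\in[L^{-}/L^{+},L^{+}/L^{-}]$, uniformly in $a$.

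With these facts in hand I would run the induction of Theorem~\ref{theo:uniqueness} verbatim. Setting $K_i=\prod_{k\ge i}\max\{\|g_k\|_\infty^2,\|g_k^{-1}\|_\infty^2\}$, which is finite by~\eqref{eq:conformal4}, with $K:=\prod_i K_i<\infty$, the base case $n=1$ holds by the previous paragraph (after enlarging $K_1$ by a fixed finite factor if necessary), and the inductive step is the same computation as there: applying conformality to $[a_1,\ldots,a_{n+1}]$ and to $T[a_1,\ldots,a_{n+1}]=[a_2,\ldots,a_{n+1}]$ and cancelling the common factor $\lambda\prod_{i=1}^{n+1}g_i(a_i)$ gives
\[
\frac{\mu([a_1,\ldots,a_{n}])}{\nu([a_1,\ldots,a_{n}])}\le K_{n+2}\,\frac{\mu([a_1,\ldots,a_{n+1}])}{\nu([a_1,\ldots,a_{n+1}])}
\]
together with the symmetric lower bound, so that $\mu([a_1,\ldots,a_n])/\nu([a_1,\ldots,a_n])$ remains in a fixed compact subinterval of $(0,\infty)$ for every finite word. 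Hence $\mu$ and $\nu$ are mutually absolutely continuous.

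It remains to pass from equivalence to equality, once more as in Theorem~\ref{theo:uniqueness}. First, $\mu$ is ergodic: a $T$-invariant set $A$ with $\mu(A)\in(0,1)$ would make $\mu(\cdot\cap A)/\mu(A)$ and $\mu(\cdot\cap A^{c})/\mu(A^{c})$ two conformal measures, each satisfying the infimum hypothesis (with constants $\mu(A)$ and $\mu(A^{c})$ respectively), hence each equivalent to $\mu$ by the previous step, yet mutually singular --- a contradiction. Then, writing $h=d\nu/d\mu>0$ and equating the two expressions for $\nu([a_1,\ldots,a_n])$ coming from conformality, letting $n\to\infty$ one gets $h\circ T=h$ $\mu$-almost everywhere; ergodicity of $\mu$ forces $h$ to be constant, and the constant equals $1$ because $\mu$ and $\nu$ are probability measures. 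Therefore $\nu=\mu$, which proves the corollary.

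The step that genuinely needs care, and the only one not already contained in Theorem~\ref{theo:uniqueness}, is the single-letter comparison of the first paragraph: extracting from the hypothesis $c_0>0$ and the summability~\eqref{eq:conformal4} a bound on $\mu([a])/\nu([a])$ that is uniform over the infinite alphabet, and, relatedly, confirming that the two conformal measures share the eigenvalue $\lambda$ (the spectral radius of $\mathcal L_{\log g}$) --- this last point being what guarantees that the Radon--Nikodym density $h$ is $T$-invariant rather than merely an eigenfunction. Everything else is a line-by-line repetition of the finite-alphabet argument.
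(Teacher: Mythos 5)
Your argument is correct and follows the same route as the paper, whose proof of this corollary is a single sentence pointing back to Theorem \ref{theo:uniqueness}: the hypothesis $\inf_a \mu([a])/\nu([a])>0$ is there precisely to replace the only use of finiteness of $\mathcal A$, namely the uniform control of the ratio on one-letter cylinders that starts the induction. Where you genuinely add something is the single-letter comparison: since $T([a])=X$, conformality gives $\lambda\int_{[a]}g\,d\nu=1$ for every letter $a$, and under \eqref{eq:conformal4} the tail product $\prod_{i\ge 2}g_i(x_i)$ is pinched between two positive finite constants, so $\nu([a])\asymp(\lambda g_0 g_1(a))^{-1}$ uniformly in $a$ for \emph{any} conformal measure. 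This makes the corollary's hypothesis essentially automatic and, more importantly, upgrades the conclusion from $\nu\ll\mu$ (which is all the one-sided hypothesis yields on its own) to mutual absolute continuity with a density bounded away from $0$ and $\infty$; that two-sidedness is what is actually needed at the end, both to know $h=d\nu/d\mu>0$ and to conclude that the constant value of $h$ equals $1$. The one point you flag but leave open, equality of the two conformality parameters, closes immediately from what you have already established: the martingale argument gives $h\circ T=(\lambda_\nu/\lambda_\mu)\,h$ almost everywhere, and since your cylinder estimates bound $h$ above and below, iterating keeps $(\lambda_\nu/\lambda_\mu)^n$ in a fixed compact subset of $(0,\infty)$ for all $n$, forcing $\lambda_\nu=\lambda_\mu$. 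With that one line added your proof is complete, and in fact more careful than the paper's own one-sentence justification.
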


\begin{proof}
	In this case the previous proof shows that $\nu$ is absolutely continuous with respect to $\mu$.
\end{proof}

\section{Eigenfunctions of product type}\label{sec:Eigenfunctions of product type}

We now analyse the (point) spectrum of the action of the Ruelle operator on functions of product type. In order to do so, we extend previous definitions to functions $g:X\to \mathbb R_+$ of product type. We say that a measurable function $g: X \to \mathbb R_+ $ of product type
is \emph{$\ell_1$-bounded} if
\begin{equation}\label{eq:strongly_regular}
\sum_{k=2}^\infty \|\log  g_k\|_\infty < \infty,
\end{equation}
and  remark that this condition implies that  $\log g$ is absolutely convergent.
Moreover,  $g$ is called \emph{summable} if $\sum_{a \in \mathcal{A}} g_1(a)< \infty$.
Observe that $g$ is always summable if $\mathcal{A}$ is finite, and that $\ell_1$-boundedness in combination with summability implies that $\|\mathcal{L}_{\log g}(1)\|_\infty < \infty$.

Furthermore, we use  balanced forms for functions $h$ of product type, which are defined by  $h((x_i)_{i\in \mathbb N}) = h_0 \prod h_i(x_i)$  where $h_0 >0$ and $\prod_{a \in \mathcal{A}} h_i(a)=1$ for all $i \in \mathbb N$.
In particular, if $\mathcal{A}$ is finite and $h$ is $\ell_1$-bounded, then $h$ always can be written in balanced form.
Moreover, for a function $g=\prod_{n=1}^\infty g_n$ in balanced form, it follows that $\|\log  g_n\|_\infty \leq v_n(\log g) \leq 2 \|\log  g_n\|_\infty$ for all $n \in \mathbb N$. Hence, Bowen's condition for $\log g$ with index $2$ is equivalent to
\begin{equation} \label{eq:Bowen's condition} \sum_{m=2}^\infty \sum_{n=m}^\infty \|\log g_n\|_\infty < \infty.\end{equation}
Recall from \cite{Wal01} that Bowen's condition has a variety of important consequences when $X$ is compact, like e. g. uniqueness of the equilibrium state, the conformal measure and the eigenfunction of the Ruelle operator. Therefore, the main novelty of the following result is the fact that it is possible to explicitly determine the eigenfunction and the maximal eigenvalue.
 We remark that the eigenvalue coincides with the one from Corollary \ref{cor:lambda} for the $1/g$-conformal measure, even though the construction below relies on the hypothesis that  
 $g$ is in balanced form.

\begin{theorem} \label{theo:eigenfunction-abstract}  Let $(X,T)$ be a topological Bernoulli shift over a finite or countable alphabet $\mathcal A$ and $g$ a  function in balanced form. Then, the Ruelle operator $\mathcal{L} = \mathcal{L}_{\log g}$ maps a   balanced function $h = \prod h_k$ with $|\sum_a g_1(a) h_1(a)|<\infty$ to a balanced function.
	\begin{enumerate}
		\item If $\mathcal{L}(h)=\lambda h$, for $h=\prod h_k$ in balanced form and some $\lambda >0$, then
		\[\lambda = g_0 \sum_{a \in \mathcal{A}} \prod_{k=1}^\infty g_k(a), \quad h_i(a) = \prod_{k > i} g_k(a) \; \forall i\in \mathbb N, a \in \mathcal A.\]
		\item If $g$ satisfies Bowen's condition (\ref{eq:Bowen's condition}) of index 2, then
		the function $h(x)=\prod_{i=1}^\infty h_i(x_i)$, with $h_i$ as above, is defined for all $x \in X$. Furthermore, if $g$ is summable, then $\lambda < \infty$.
	\end{enumerate}
\end{theorem}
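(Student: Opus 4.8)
The plan is to exploit that the shift has exactly the preimages $T^{-1}(x)=\{(a,x_1,x_2,\dots):a\in\mathcal A\}$, so that for a balanced $h=h_0\prod_k h_k$ one can compute $\mathcal L h$ in closed form. Writing $y=(a,x_1,x_2,\dots)$, one has $g(y)=g_0\,g_1(a)\prod_{j\ge1}g_{j+1}(x_j)$ and $h(y)=h_0\,h_1(a)\prod_{j\ge1}h_{j+1}(x_j)$, and summing over $a$ pulls the $a$-dependence out:
$$\mathcal L h(x)=h_0 g_0\Big(\sum_{a\in\mathcal A}g_1(a)h_1(a)\Big)\prod_{j\ge1}\big(g_{j+1}(x_j)h_{j+1}(x_j)\big).$$
I would first check that this is a genuine (finite, positive) function: the displayed product converges for every $x$ because $g$ and $h$ are of product type (after a shift of the index this reduces to the product-type convergence of $g$ and $h$ at a point of $X$), and the leading constant is finite by the hypothesis $|\sum_a g_1(a)h_1(a)|<\infty$. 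Moreover each factor $a\mapsto g_{j+1}(a)h_{j+1}(a)$ has product $1$ over $\mathcal A$ since $g$ and $h$ are balanced. This is precisely the assertion that $\mathcal L$ preserves balanced functions, and it displays $\mathcal L h$ in balanced form.

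For part~(1) I would use that the balanced representation of a positive function of product type is unique: if $c\prod_k\varphi_k(x_k)=d\prod_k\psi_k(x_k)$ with $\prod_a\varphi_k(a)=\prod_a\psi_k(a)=1$, then varying one coordinate at a time shows $\varphi_k=e_k\psi_k$ for some constant $e_k>0$, and the normalisations force $e_k=1$ (and then $c=d$). Comparing the two balanced representations in $\mathcal L h=\lambda h$ then yields $\lambda=g_0\sum_a g_1(a)h_1(a)$ together with the recursion $h_j(a)=g_{j+1}(a)h_{j+1}(a)$ for all $j\ge1$ and $a\in\mathcal A$. Iterating gives $h_n(a)=h_1(a)\,/\,\prod_{k=2}^n g_k(a)$; since $h$ is of product type the series $\sum_k\log h_k(a)$ converges, so $h_n(a)\to1$, and since $g$ is of product type $\prod_{k=2}^n g_k(a)\to\prod_{k>1}g_k(a)$; hence $h_1(a)=\prod_{k>1}g_k(a)$, and then the recursion gives $h_i(a)=\prod_{k>i}g_k(a)$ for every $i$. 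Substituting the formula for $h_1$ into the expression for $\lambda$ gives $\lambda=g_0\sum_a\prod_{k\ge1}g_k(a)$.

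For part~(2), with $h_i(a)=\prod_{k>i}g_k(a)$ one has $|\log h_i(a)|\le\sum_{k>i}\|\log g_k\|_\infty$ uniformly in $a$, hence
$$\sum_{i\ge1}\big|\log h_i(x_i)\big|\ \le\ \sum_{i\ge1}\sum_{k>i}\|\log g_k\|_\infty\ =\ \sum_{m=2}^\infty\sum_{n=m}^\infty\|\log g_n\|_\infty,$$
where the last equality is the substitution $m=i+1$; by Bowen's condition \eqref{eq:Bowen's condition} of index $2$ this is finite, so $\log h(x)=\sum_i\log h_i(x_i)$ converges absolutely for every $x\in X$ and $h$ is a well-defined positive function on all of $X$. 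For the last statement, Bowen's condition of index $2$ contains the summand $m=2$, i.e. $\sum_{n\ge2}\|\log g_n\|_\infty<\infty$, so $h_1(a)=\prod_{k\ge2}g_k(a)\le\exp\big(\sum_{n\ge2}\|\log g_n\|_\infty\big)$ is bounded uniformly in $a$; combined with summability of $g$ this gives $\lambda=g_0\sum_a g_1(a)h_1(a)<\infty$.

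The step I expect to be the main obstacle is not any single estimate but the careful bookkeeping in the countable-alphabet (non-compact) case: making sure $\mathcal L h$ is an honest balanced function rather than a formal product, justifying the interchange of infinite products in checking balance and in the uniqueness argument, and correctly matching the double sum $\sum_{i\ge1}\sum_{k>i}\|\log g_k\|_\infty$ with the left-hand side of \eqref{eq:Bowen's condition}. The one genuinely load-bearing input is the remark that $h_n(a)\to1$ because $h$ is of product type — this is what converts the recursion into the closed formula for $h_1$ and hence for the whole eigenfunction.
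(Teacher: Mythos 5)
Your proposal is correct and follows essentially the same route as the paper: compute $\mathcal L h$ over the fibre $T^{-1}(x)=\{(a,x):a\in\mathcal A\}$, read off the balanced form to get $\lambda=g_0\sum_a g_1(a)h_1(a)$ and the recursion $h_i=g_{i+1}h_{i+1}$, then bound $\sum_i|\log h_i(x_i)|$ by the Bowen double sum of index $2$. In fact you are slightly more explicit than the paper at the one delicate point — the paper closes the recursion "by induction", while you justify it by noting that $h_n(a)\to 1$ because $h$ is of product type — but this is a refinement of the same argument, not a different one.
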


\begin{proof} We first show how the Ruelle operator $\mathcal{L} = \mathcal{L}_{\log g}$ acts on the set of  balanced functions. In order to do so, observe that if $h=1$ and $h$ is in balanced form, then all the entries of $h$ have to be equal to one. In particular, there exists at most one balanced form of a function.
	For $h$  in balanced form, we have
	\begin{equation}\label{eq:auto-function}
	\mathcal{L}(h)(x) = \sum_{a \in \mathcal{A}} g(ax) h(ax)
	= g_0 h_0 \sum_{a \in \mathcal{A}} g_1(a) h_1(a) \prod_{i=1}^\infty g_{i+1}(x_i) h_{i+1}(x_i).
	\end{equation}
	Hence, provided that $\sum_a g_1(a) h_1(a)$ is finite,
	the balanced form of $\mathcal{L}(h)$ is given by $(\mathcal{L}(h))_0 = g_0 h_0 \sum_{a \in \mathcal{A}} g_1(a) h_1(a)$ and $(\mathcal{L}(h))_i  =  g_{i+1} h_{i+1}$ for all $i\in \mathbb N$.
	
	\noindent\emph{Proof of}\! (i). Assume that
	$\mathcal{L}(h)=\lambda h$, for $h$ in balanced form with $h_0 =1$.
	It follows from (\ref{eq:auto-function}) that
	$\mathcal{L}(h)=\lambda h$ implies that
	$\lambda= g_0 \sum_{a \in \mathcal{A}} g_1(a) h_1(a)$
	and $h_i =   g_{i+1} h_{i+1}$ for all $i\in \mathbb N$.
	Hence, by induction,
	\[
	h_i
	=
	\prod_{k > i} g_k\; \ \ (\forall i\in \mathbb N), \quad \lambda
	=
	g_0 \sum_{a \in \mathcal{A}} \prod_{i=1}^\infty g_k(a).
	\]
	\noindent\emph{Proof of}\! (ii). Bowen's condition implies that $ \sum_{i\geq 2} \log g_i$ is an absolutely convergent series. Hence, $h(x)$ exists for all $x \in X$. In order to show the existence of $\lambda$, note that by summability,
	\begin{align*}
	\lambda
	=
	g_0 \sum_{a \in \mathcal{A}} \prod_{k=1}^\infty g_k(a)
	\leq
	g_0 \left({ \textstyle \sum_{a \in \mathcal{A}}  g_1(a) }\right)
	e^{ \sum_{k=2}^\infty \| \log g_k \|_\infty}
	<
	\infty.
	\end{align*}
	Since $ \prod_{k} g_k(a) >0$, it follows from this that $\lambda$ exists.
\end{proof}

Observe that the theorem does not state that the space of balanced functions of product type is $\mathcal{L}$-invariant due to the fact that the
sum  $\sum_{a \in \mathcal{A}} g_1(a) h_1(a)$ might be not well defined if $\mathcal{A}$ is infinite. In order to construct an invariant function space in this case one has to consider subclasses of potentials and functions of product type. For example, it easily follows from the argument in the first part of the above proof that, if $g=\prod_i g_i$ is summable and $\| g_i \|_\infty< \infty $ for all $i$, then  $\mathcal{L}_{\log g}$ acts on the space
\[ \{    f = {\textstyle \prod_i } f_i :  \| g_i \|_\infty< \infty  \; \forall i =1,2,\ldots\}. \]

The main motivation of this note is to consider potentials
beyond Bowen's condition. In particular, it will turn out
that Bowen's condition is a sharp condition for the existence of a continuous eigenfunction $h$.
However, the situation with respect to measures is somehow satisfactory,
as it is possible to explicitly construct conformal measures and
equilibrium states for $\ell_1$-bounded potentials.
In order to do so, we first have to introduce the
action of $\mathcal{L}_{\log g}$ on measures and the notions
of pressure and equilibrium states.

If $g$ is  $\ell_1$-bounded  and summable, then $\log g$
is locally uniformly continuous and $\|\mathcal{L}_{\log g}(1)\|_\infty < \infty$.
Moreover, by a standard calculation, $\mathcal{L}_{\log g}$
acts on uniformly continuous functions.
In particular,
$
\int h d\mathcal{L}_{\log g}^\ast \mu
=
\int \mathcal{L}_{\log g}(h) d \mu
$, for bounded continuous functions $h$,
defines an operator $\mathcal{L}_{\log g}^*$ on the space of finite signed Borel measures on $X$.

We now recall the definition of the pressure for countable state Markov shifts from
\cite{SU07}.
As it is shown in there, the pressure $P(\log g)$ defined by
\[ P(\log g) := \lim_{n \to \infty} \frac{1}{n} \log \sum_{a \in \mathcal{A}^n} \sup_{x \in [a]} \prod_{i=0}^{n-1} g\circ T^i(x) \]
exists by subadditivity, but is not necessarily finite.
However, as shown below, $P(\log g) < \infty$
for $\ell_1$-bounded, summable potentials $g$.
Also recall that, if $\mathcal A$ is finite and $\log g$
is continuous, the variational principle (\cite{Wal1975})
\[
P(\log g)
=
\sup \{
h_m(T) + {\textstyle \int \log g\ dm} :
m \hbox{ probability with }m=m \circ T^{-1}
\}
\]
holds, with $h_m(T)$ denoting the Kolmogorov-Sinai entropy. Furthermore, if $m$ is an invariant probability measure which realizes the supremum, $m$ is referred to as an \emph{equilibrium state}. However, note that this notion is only applicable if $\mathcal A$ is finite since it is unknown whether a
variational principle holds for general locally bounded, summable potentials.

The construction of an equilibrium state for topological Bernoulli shifts is  based on the following
observation which reveals the independence from the existence of the eigenfunction $h$.
Namely, a formal calculation gives, for $x= (x_i)_{i\in \mathbb N}$, that
\begin{eqnarray}
\nonumber \frac{g(x)\, h(x)}{\lambda \cdot  h\circ T(x)}
&=& \frac{g(x)}{\lambda} \prod_{i=1}^\infty \frac{h_i(x_i)}{h_i(x_{i+1})}
=
\frac{g(x) h_1 (x_1)}{\lambda}
\prod_{i=1}^\infty \frac{h_{i+1}(x_{i+1})}{h_i(x_{i+1})}
\\
\label{def:normalized_function}
&=&
\frac{g(x) h_{1}(x_1)}{\lambda \prod_{i=1}^\infty  g_{i+1}(x_{i+1})}
=
\frac{\prod_{i=1}^\infty  g_i(x_1)}{\sum_{a \in \mathcal{A}}
	\prod_{i=1}^\infty  g_i(a) }
=:
\tilde g(x).
\end{eqnarray}
Hence, even though the function $h$ might not exist, the quotients $h/h\circ T$
and $\tilde g = gh/(\lambda h \circ T)$ are well defined for summable,
locally bounded $g$.

The following theorem now provides explicit constructions of
conformal measures and equilibrium states as well as a partial
answer to the existence of the eigenfunction.
If the sequence $(\log h_i)_{i\in\mathbb{N}}$, with $(h_i)$
as above is square summable, then the eigenfunction $h$
exists a.e. with respect to the Bernoulli measure of maximal entropy,
but not necessarily with respect to the conformal measure (see the class of examples in Section \ref{exa}).

The motivation for the following definition, equivalent to \eqref{eq:intermediate} above, is to provide a sufficient condition for this property. We say that 
 $g$ has \emph{$\ell_2$-bounded tails} if there exists $k \in \mathbb N$ such that
 \begin{equation} \label{eq:square-summable}
 \sum_{i=k}^\infty \sup_{a \in \mathcal{A}} \bigg({ \sum_{j=i}^\infty }\log g_j(a)\bigg)^2< \infty,
\end{equation}

\begin{theorem}
	\label{theo:equilibrium}
	Let $(X,T)$ be a topological Bernoulli shift over a finite
	or countable alphabet $\mathcal A$ and  let $g$ be a $\ell_1$-bounded, summable potential function.
	Furthermore, let $\lambda$ be as in 
	Theorem \ref{theo:eigenfunction-abstract} and assume that $\mu=\otimes_{n=1}^\infty \mu_n$ is a measure of product type and $\tilde \mu$ is the Bernoulli measure with weights $\{ \tilde\mu_0(a) : a\in \mathcal A\}$, where
	\begin{equation}
	\nonumber
	\label{eq:conformal+equilibrium}
	{\mu}_n(a) := \prod_{i=1}^n g_i(a) \left/
	\sum_{b \in \mathcal{A}} \prod_{i=1}^n g_i(b) \right. ,
	\quad
	{\tilde \mu}_0(a) := \prod_{i=1}^\infty g_i(a)
	\left/ \sum_{b \in \mathcal{A}} \prod_{i=1}^\infty g_i(b) \right.
	.
	\end{equation}
	\begin{enumerate}
		\item We have $\mathcal{L}_{\log g}^\ast(\mu)=\lambda \mu$, $\mathcal{L}_{\log \tilde g}^\ast(\tilde \mu)=\tilde \mu$, $\log \lambda = P(\log g)$  and
		\[ P(\log g) = h_{\tilde \mu}(T) + {\textstyle \int \log g d\tilde \mu}.\]
		If $\mathcal{A}$ is finite, then $\tilde \mu$ is an equilibrium state.
		\item If $g$ is in balanced form, $\mathcal{A}$ is finite and, for some $k>1$,  \eqref{eq:square-summable} holds,
		then $h(x)$ defined as in Theorem \ref{theo:eigenfunction-abstract} exists for almost every $x \in X$ with respect to the $(1/|\mathcal{A}|, \ldots, 1/|\mathcal{A}|)$-Bernoulli measure on $X$. Furthermore,   $\mathcal{L}_{\log g}(h)=\lambda h$.
	\end{enumerate}
\end{theorem}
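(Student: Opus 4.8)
The plan is to dispatch the five claims of part (1) by direct cylinder-set computations exploiting the product structure (recalling that for an $\ell_1$-bounded, summable $g$ the operator $\mathcal{L}_{\log g}$ acts on bounded uniformly continuous functions and its dual on finite signed measures), and then to treat part (2) by an almost sure convergence argument for a series of independent random variables. Throughout part (1), write $Z_j:=\sum_{b\in\mathcal A}\prod_{i=1}^{j}g_i(b)$ for $j\in\mathbb N\cup\{\infty\}$; summability and $\ell_1$-boundedness give $0<Z_j<\infty$ and $Z_j\to Z_\infty$ by dominated convergence. Since cylinders are clopen, $\mathbf 1_{[a_1,\dots,a_n]}$ is continuous, so $\mathcal{L}_{\log g}^\ast(\mu)=\lambda\mu$ follows once we check $\int\mathcal{L}_{\log g}(\mathbf 1_{[a_1,\dots,a_n]})\,d\mu=\lambda\,\mu([a_1,\dots,a_n])$ for all cylinders, both sides being finite measures. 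Here $\mathcal{L}_{\log g}(\mathbf 1_{[a_1,\dots,a_n]})(x)=\mathbf 1_{[a_2,\dots,a_n]}(x)\,g_0g_1(a_1)\prod_{i\ge1}g_{i+1}(x_i)$; integrating against $\mu=\bigotimes_i\mu_i$ and using the elementary identities $\int g_{i+1}\,d\mu_i=Z_{i+1}/Z_i$ and $g_{i+1}(a)\,\mu_i(a)=\big(\prod_{l=1}^{i+1}g_l(a)\big)/Z_i$, the factor $\prod_{i\ge n}(Z_{i+1}/Z_i)$ telescopes to $Z_\infty/Z_n$, the remaining $Z_1,\dots,Z_{n-1}$ cancel, and one is left with exactly $g_0Z_\infty\cdot\mu([a_1,\dots,a_n])$, where $g_0Z_\infty=g_0\sum_a\prod_kg_k(a)=\lambda$ by Theorem~\ref{theo:eigenfunction-abstract}. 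The identity $\mathcal{L}_{\log\tilde g}^\ast(\tilde\mu)=\tilde\mu$ is immediate since, by \eqref{def:normalized_function}, $\tilde g(x)=\tilde\mu_0(x_1)$ depends only on $x_1$, so $\mathcal{L}_{\log\tilde g}(\mathbf 1_{[a_1,\dots,a_n]})(x)=\tilde\mu_0(a_1)\mathbf 1_{[a_2,\dots,a_n]}(x)$, with $\tilde\mu$-integral $\prod_{i=1}^n\tilde\mu_0(a_i)=\tilde\mu([a_1,\dots,a_n])$.

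For $\log\lambda=P(\log g)$ I would combine the eigenmeasure identity with bounded distortion. Iterating $\mathcal{L}_{\log g}^\ast\mu=\lambda\mu$ on the constant function gives $\lambda^n=\int\mathcal{L}_{\log g}^n(1)\,d\mu$, so $\lambda^n\le\|\mathcal{L}_{\log g}^n(1)\|_\infty\le Z_n(\log g):=\sum_{a\in\mathcal A^n}\sup_{[a]}\prod_{i=0}^{n-1}g\circ T^i$. Conversely, since $\sum_k v_k(\log g)<\infty$ by $\ell_1$-boundedness, dominated convergence gives $\tfrac1m\sum_{l=1}^m s_l(\log g)\to0$, so $\log g$ lies in Yuri's class by Proposition~\ref{prop_Walters_Bowen_Yuri}(3) and its proof yields $\mathrm{var}_n(S_n(\log g))=o(n)$; because $b_1\dots b_ny$ and $b_1\dots b_ny'$ agree on their first $n$ coordinates, $Z_n(\log g)\le e^{\mathrm{var}_n(S_n(\log g))}\int\mathcal{L}_{\log g}^n(1)\,d\mu=e^{\mathrm{var}_n(S_n(\log g))}\lambda^n$, hence $n^{-1}\log Z_n(\log g)\to\log\lambda$, i.e. $P(\log g)=\log\lambda$. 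The equality $P(\log g)=h_{\tilde\mu}(T)+\int\log g\,d\tilde\mu$ is then bookkeeping: $h_{\tilde\mu}(T)=-\sum_a\tilde\mu_0(a)\log\tilde\mu_0(a)$ for the Bernoulli measure $\tilde\mu$, while $\prod_ig_i(a)=Z_\infty\tilde\mu_0(a)$ gives $\int\log g\,d\tilde\mu=\log g_0+\sum_a\tilde\mu_0(a)\log\prod_ig_i(a)=\log g_0+\log Z_\infty+\sum_a\tilde\mu_0(a)\log\tilde\mu_0(a)=\log\lambda-h_{\tilde\mu}(T)$; and for finite $\mathcal A$ the potential $\log g$ is continuous, so the variational principle identifies $\tilde\mu$ as an equilibrium state.

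For part (2), set $\sigma_i(a):=\sum_{l>i}\log g_l(a)$, so that the infinite product $h(x)=\prod_ih_i(x_i)$ converges to a positive real precisely when the series $\sum_i\sigma_i(x_i)$ converges. On $(X,\rho)$ with $\rho$ the uniform Bernoulli measure, the functions $x\mapsto\sigma_i(x_i)$ are independent; since $g$ is in balanced form, $\sum_a\log g_l(a)=0$ and hence each has $\rho$-mean $\tfrac1{|\mathcal A|}\sum_a\sigma_i(a)=0$, while $\int\sigma_i^2\,d\rho\le\sup_a\sigma_i(a)^2$ is summable in $i$ by \eqref{eq:square-summable} (after shifting the index $i\mapsto i+1$). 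Kolmogorov's theorem on sums of independent, square-integrable, mean-zero random variables then gives convergence of $\sum_i\sigma_i(x_i)$ for $\rho$-a.e. $x$; call $G$ this full-measure set, on which $h$ is defined. The set $G$ is invariant under each prefixing map $x\mapsto ax$, $a\in\mathcal A$, since $\sum_i\sigma_i((ax)_i)=\sigma_1(a)+\sum_j\sigma_{j+1}(x_j)$ and $\sum_j\sigma_{j+1}(x_j)=\sum_j\sigma_j(x_j)-\sum_j\log g_{j+1}(x_j)$ with the last series absolutely convergent by $\ell_1$-boundedness. Hence for $x\in G$ the finite sum $\mathcal{L}_{\log g}(h)(x)=\sum_{a\in\mathcal A}g(ax)h(ax)$ consists of well-defined terms, and the algebraic identity from the proof of Theorem~\ref{theo:eigenfunction-abstract}(i) — $g(ax)h(ax)=g_0\,\big(g_1(a)h_1(a)\big)\prod_{i\ge1}\big(g_{i+1}(x_i)h_{i+1}(x_i)\big)$ together with $g_1(a)h_1(a)=\prod_{l\ge1}g_l(a)$ and $g_{i+1}(x_i)h_{i+1}(x_i)=h_i(x_i)$ — yields $\mathcal{L}_{\log g}(h)(x)=g_0\big(\sum_a\prod_lg_l(a)\big)h(x)=\lambda h(x)$ on $G$, as claimed.

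I expect the main obstacle to be the convergence bookkeeping rather than any conceptual difficulty: in part (1) one must carefully justify the passage $Z_n\to Z_\infty$ and the telescopings when $\mathcal A$ is infinite, and in part (2) the substantive point is isolating the prefix-invariant full-measure set $G$ on which $\mathcal{L}_{\log g}h$ is even meaningful (the potential being genuinely unbounded on fibres in the relevant range); the probabilistic step is routine once balancedness is used to centre the summands.
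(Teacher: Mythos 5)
Your proposal is correct and follows the paper's own strategy almost step for step: the eigenmeasure identities come from the same telescoping cylinder computations with $Z_j=\sum_b\prod_{i\le j}g_i(b)$, the entropy identity is the same Bernoulli bookkeeping, and part (2) is the same application of Kolmogorov's theorem to the independent, mean-zero (by balancedness) summands $\sigma_i(x_i)$ with variances controlled by \eqref{eq:square-summable}. The one substantive divergence is the step $P(\log g)=\log\lambda$: the paper bounds the $n$-step distortion by $2\sum_{k=0}^{n-1}\sum_{i\geq k}\|\log g_i\|_\infty$ and then only records the weaker conclusion $\leq Cn$, writing $\mathcal L_{\log g}^n(1)(x)=e^{\pm Cn}\mathcal L_{\log g}^n(1)(y)$, which by itself pins $\tfrac1n\log Z_n(\log g)$ down only to within $C$ of $\log\lambda$; you instead extract the $o(n)$ bound on $\mathrm{var}_n(S_n(\log g))$ via the Ces\`aro decay of $s_l(\log g)$ from (the proof of) Proposition \ref{prop_Walters_Bowen_Yuri}, which is exactly what is needed to conclude $n^{-1}\log Z_n(\log g)\to\log\lambda$. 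This is the correct explicit completion of the paper's sketch rather than a different idea. Likewise, your verification that the $\rho$-full-measure convergence set is stable under the prefixing maps $x\mapsto ax$ makes rigorous the paper's one-line remark that $\mathcal L_{\log g}(h)=\lambda h$ ``follows as in Theorem \ref{theo:eigenfunction-abstract}''. No gaps.
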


\begin{proof} As it is well known, $\mathcal{L}_{\log g}^\ast(\mu)=\lambda \mu$ if and only if $\mu$ is $1/g$-conformal. Hence, by  the first part of Theorem \ref{theo:existence}, we have that $\mu$ is given by $\mu_n$ as in the statement of the theorem. In order to verify that $\lambda$ is as in  Theorem \ref{theo:eigenfunction-abstract}, note that by bounded convergence,
	\begin{eqnarray*}
		\int \mathcal{L}_{\log g} 1d\mu &=& g_0 \sum_{b \in \mathcal{A}} g_1(b) \prod_{i=1}^\infty \int g_{i+1}(x_i) d\mu_i(x_i) \\
		&= & g_0 \sum_{b \in \mathcal{A}} g_1(b)  \prod_{i=1}^\infty  \frac{\sum_{a \in \mathcal{A}} g_1(a) \cdots g_{i+1}(a)}{\sum_{a \in \mathcal{A}} g_1(a) \cdots g_{i}(a)} \\
		&=& g_0 \lim_{i \to \infty} \sum_{a \in \mathcal{A}} g_1(a) \cdots g_{i+1}(a) = g_0 \sum_{a \in \mathcal{A}} \prod_{i=1}^\infty   g_i(a).
	\end{eqnarray*}
	Hence,  $\mathcal{L}_{\log g}^\ast(\mu)=\lambda \mu$ with $\lambda$ as in Theorem \ref{theo:eigenfunction-abstract}.  In order to show that $\mathcal{L}_{\log \tilde g}^\ast(\tilde \mu)=\tilde \mu$, note that
	$\tilde g$ as defined in  (\ref{def:normalized_function}) only depends on the first coordinate and in particular is of product type and locally bounded. Furthermore, it follows from
	$\mathcal{L}_{\log \tilde g} (1) =1$ that $\tilde g$ is summable. Hence, $\mathcal{L}_{\log \tilde g}^\ast(\tilde \mu)=\tilde \mu$ again by the first part of Theorem \ref{theo:existence}.
	
	We now establish $P(\log g) = h_{\tilde \mu}(T) + {\textstyle \int \log g\  d\tilde \mu}$ by proving that $h_{\tilde \mu}(T) = \log \lambda - \int \log g d\tilde \mu$ and $P(\log g)=\log \lambda$. As $\tilde \mu$ is a Bernoulli measure we obtain
	\begin{align*}
	h_{\tilde \mu}(T) &=  - \int  \log \tilde \mu([x_1])  \tilde \mu (d(x))\\
	&= -\sum_{a \in  \mathcal{A}} \tilde \mu_0(a)
	\left( \log \prod_{i=1}^\infty g_i(a) - \log \sum_{b \in \mathcal{A}} \prod_{i=1}^\infty g_i(b) \right) \\
	&= \log \sum_{b \in \mathcal{A}} \prod_{i=1}^\infty g_i(b) - \sum_{i=1}^\infty \sum_{a \in  \mathcal{A}}   \log g_i(a) \tilde \mu_0(a)
	= \log \lambda - \int \log g d \tilde \mu.
	\end{align*}
	In order to show that $P(\log g)=\log \lambda$, note that $\ell_1$-boundedness implies for $x,y \in [a_1, \ldots,a_n]$ that there exists $C>0 $ such that
	\[\log  \prod_{k=0}^{n-1} \frac{g(T^k(x))}{g(T^k(y))}  \leq  2 \sum_{k=0}^{n-1} \sum_{i \geq k} \|\log g_i\|_\infty \leq Cn. \]
	Hence, $\mathcal{L}_{\log g}^n(1)(x) = e^{\pm Cn} \mathcal{L}_{\log g}^n(1)(y)$ for all $x,y \in X$. Since $\log n/n \to 0$, we have
	\begin{align*}
	P(\log g)
	&= \lim_{n\to \infty} \frac{1}{n} \log \mathcal{L}_{\log g}^n(1)(x)  = \lim_{n\to \infty} \frac{1}{n} \int \log \mathcal{L}_{\log g}^n(1) d\mu \\
	&= \lim_{n\to \infty} \frac{1}{n} \log  \int 1 d (\mathcal{L}_{\log g}^n)^\ast (\mu)
	= \log \lambda.
	\end{align*}
	Hence, assertion 1 is proven. In order to show assertion 2, let
	$\rho$ denote the $(1/|\mathcal{A}|, \ldots, 1/|\mathcal{A}|)$-Bernoulli measure on $X$, the measure of maximal entropy. Write $\rho = \otimes \rho_i$, the product of the equidistribution $\rho_i$ on $\mathcal A$. With respect to this measure, and since $g$ is balanced it follows that, for all   $j \geq k$,
	\[
	\int \log h_j  d\rho = \int \log h_j(a)  d\rho_j(a) =
	\sum_{i>j } \int  \log g_i(a) d\rho_j(a) = 0.
	\]
	We now consider $(h_i)_{i\in \mathbb N}$ as a stochastic processes on the probability space $(X, \rho)$.  In particular, the above implies that $\mathbb E(\log h_j)=0$. Furthermore, for the variances of $\log h_j$, we obtain
	\[ \hbox{Var}(\log h_j) = \int (\log h_j)^2 d\rho \leq \max_{a \in \mathcal{A}} (\log h_j(a))^2 = \max_{a \in \mathcal{A}} \left({\textstyle \sum_{i>j} }\log g_i(a)\right)^2.
	\]
	Hence, the summability condition implies that $\sum_{j>k} \hbox{Var}(\log h_j) < \infty$. 
	As a consequence of Kolmogorov's three series theorem (as in \cite[Corollary 3 on p. 87]{Luk75}), it follows that $\log h = \sum_{j \geq 1} \log h_j$ converges $\rho$-a.s. The remaining assertion  $\mathcal{L}_{\log g}(h)=\lambda h$ follows as in
	Theorem \ref{theo:eigenfunction-abstract}.
\end{proof}

The existence of $h$ in the second part of the above theorem is based on the fact that the log of a balanced function has zero integral with respect to the measure of maximal entropy. By considering a suitable scaling of $h$, an analogous result holds with respect to $\mu$. The existence of this function is equivalent to the equivalence of the measures $\mu$ and $\tilde \mu$.

\begin{theorem}
	\label{theo:invariant-function-unbalanced}
	Let $(X,T)$ be a topological Bernoulli shift over a finite or countable alphabet $\mathcal A$, let $g$ be a $\ell_1$-bounded, summable potential 
	function of product type and let $\mu$ and $\lambda$ be as in Theorem \ref{theo:equilibrium}.
\begin{enumerate} 
\item There is at most one $h \in L^1(X,\mu)$ with $ \mathcal{L}_{\log g}(h)
 = \lambda h$ and $\int h d\mu=1$. 
\item If 
\eqref{eq:square-summable} holds for some $k \in \mathbb N$, then
 the function 
 \begin{equation}\label{eq:non-normalized-eigenfunction}
 h_\mu ((x_j)) = \prod_{j=1}^{\infty} \frac{\sum_{a \in \mathcal{A}} \prod_{l=1}^j g_l(a) }{\sum_{a \in \mathcal{A}} \prod_{l=1}^\infty g_l(a)} \prod_{l=1}^\infty g_{l+j}(x_j)
\end{equation}
 is in $L^1(X,\mu)$. Furthermore, $\int h_\mu d\mu=1$,  $ \mathcal{L}_{\log g}(h_\mu)
 = \lambda h_\mu $ and $d\tilde{\mu} = h_\mu d\mu $.
 \item The function $h_\mu$ exists $\mu$-a.s.. 
 Moreover, $\int h_\mu d\mu>0$ if and only if $\mu$ and $\tilde{\mu}$ are equivalent. 
 If $\int h_\mu d\mu=0$, then $\tilde{\mu}$ and $\mu$ are singular measures.
 \end{enumerate}
\end{theorem}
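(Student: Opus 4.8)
The plan is to recognize the function in \eqref{eq:non-normalized-eigenfunction} as an infinite product of independent, $\mu$-mean-one factors, so that the whole of (2)--(3) reduces to a martingale and Kakutani-dichotomy computation, while the uniqueness in (1) follows from ergodicity of the \emph{conformal} product measure $\mu$. Write $c_j:=\big(\sum_a\prod_{l=1}^j g_l(a)\big)\big/\big(\sum_a\prod_{l=1}^\infty g_l(a)\big)$ and $h_j(a):=\prod_{l>j}g_l(a)$, so that the factor of \eqref{eq:non-normalized-eigenfunction} depending on $x_j$ is $\xi_j(x):=c_j\,h_j(x_j)$ and $h_\mu=\prod_{j\ge1}\xi_j$. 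Using $\mu_j(a)=\prod_{l\le j}g_l(a)/\sum_b\prod_{l\le j}g_l(b)$ one checks directly that $\int\xi_j\,d\mu_j=1$ and that $\tilde\mu_0(a)/\mu_j(a)=c_j h_j(a)$; hence $M_n:=\prod_{j\le n}\xi_j$ is exactly the density of $\tilde\mu$ with respect to $\mu$ on $\mathcal F_n:=\sigma(x_1,\dots,x_n)$. Thus $(M_n)$ is a non-negative $\mu$-martingale and the martingale convergence theorem gives $M_n\to h_\mu$ $\mu$-a.s., which proves that $h_\mu$ exists $\mu$-a.s. Since $\mu=\otimes_{j\ge1}\mu_j$ and $\tilde\mu=\otimes_{j\ge1}\tilde\mu_0$ are product measures with mutually equivalent marginals (here $g_i>0$ is used, so every $\mu_j(a),\tilde\mu_0(a)>0$), Kakutani's theorem yields the dichotomy $\tilde\mu\sim\mu$ or $\tilde\mu\perp\mu$, decided by the Hellinger affinities $\rho_j:=\sum_a\sqrt{\mu_j(a)\tilde\mu_0(a)}$ via $\tilde\mu\sim\mu\iff\prod_j\rho_j>0\iff\sum_j(1-\rho_j)<\infty$. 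In the equivalent case $h_\mu=d\tilde\mu/d\mu>0$ $\mu$-a.s.\ and $\int h_\mu\,d\mu=1$; in the singular case $M_n\to0$ $\mu$-a.s.\ and $\int h_\mu\,d\mu=0$. This already gives all of (3): $\int h_\mu\,d\mu\in\{0,1\}$, $\int h_\mu\,d\mu>0\Leftrightarrow\mu\sim\tilde\mu$, $\mu\perp\tilde\mu$ when $\int h_\mu\,d\mu=0$, and $d\tilde\mu=h_\mu\,d\mu$ whenever $\mu\sim\tilde\mu$.

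\textbf{Proof of (2).} Here I would estimate the $\rho_j$ under \eqref{eq:square-summable}. Put $W_j:=\log\xi_j=\log c_j+\log h_j$. $\ell_1$-boundedness gives $\|\log h_j\|_\infty\le\sum_{l>j}\|\log g_l\|_\infty\to0$, and the elementary bound $|1-c_j|\le c_j\sup_b|h_j(b)-1|\ll\|\log h_j\|_\infty$ gives $\|W_j\|_\infty\ll\|\log h_j\|_\infty\to0$. Since $\int e^{W_j}\,d\mu_j=1$, a second-order Taylor expansion gives $1-\rho_j=1-\int e^{W_j/2}\,d\mu_j=\tfrac18\int W_j^2\,d\mu_j+O(\|W_j\|_\infty^3)$, so $\sum_j(1-\rho_j)\ll\sum_j\|\log h_j\|_\infty^2+\sum_j\|\log h_j\|_\infty^3$. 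As $\sum_{l>j}\log g_l(a)=\sum_{l\ge j+1}\log g_l(a)$, condition \eqref{eq:square-summable} says precisely that $\sum_j\sup_a(\log h_j(a))^2<\infty$, so the right-hand side is finite; hence $\prod_j\rho_j>0$, i.e.\ $\mu\sim\tilde\mu$, which by the previous paragraph yields $h_\mu\in L^1(\mu)$, $\int h_\mu\,d\mu=1$ and $d\tilde\mu=h_\mu\,d\mu$. Finally $\mathcal L_{\log g}h_\mu=\lambda h_\mu$ follows by testing against bounded $\psi$: $\int\psi\,\mathcal L_{\log g}(h_\mu)\,d\mu=\int\mathcal L_{\log g}\big((\psi\circ T)h_\mu\big)\,d\mu=\lambda\int(\psi\circ T)h_\mu\,d\mu=\lambda\int\psi\circ T\,d\tilde\mu=\lambda\int\psi\,d\tilde\mu=\lambda\int\psi h_\mu\,d\mu$, using $\mathcal L^\ast_{\log g}\mu=\lambda\mu$ and $T$-invariance of the Bernoulli measure $\tilde\mu$ (alternatively one verifies this directly from the product structure of $h_\mu$, reproducing $\lambda=g_0\sum_a\prod_k g_k(a)$).

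\textbf{Proof of (1).} If $h\in L^1(X,\mu)$ satisfies $\mathcal L_{\log g}h=\lambda h$, then $\mathcal L_{\log g}h^+\ge\max(\lambda h,0)=\lambda h^+$ with $\int\mathcal L_{\log g}h^+\,d\mu=\lambda\int h^+\,d\mu$, forcing $\mathcal L_{\log g}h^\pm=\lambda h^\pm$; and for $h\ge0$ the finite measure $h\,d\mu$ is $T$-invariant (test against $\psi\circ T$ and use the eigen-equation together with $\mathcal L^\ast_{\log g}\mu=\lambda\mu$). Now the product measure $\mu$ is ergodic for $T$: a set with $T^{-1}A=A$ lies in the tail $\sigma$-algebra $\bigcap_n\sigma(x_{n+1},x_{n+2},\dots)$, which is $\mu$-trivial by Kolmogorov's $0$--$1$ law. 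Consequently there is at most one $T$-invariant probability absolutely continuous with respect to $\mu$: if $\nu_1,\nu_2$ are two such, then $\nu:=\tfrac12(\nu_1+\nu_2)\ll\mu$ is $T$-invariant and, again by ergodicity of $\mu$, ergodic, so $d\nu_1/d\nu$ is $T$-invariant hence $\nu$-a.s.\ constant, i.e.\ $\nu_1=\nu$. Applying this to $h^+\,d\mu$ and $h^-\,d\mu$ shows these two invariant measures are proportional, hence (disjoint supports) one of them vanishes; the normalization $\int h\,d\mu=1$ then forces $h=h^+\ge0$ and $h\,d\mu$ to be that unique invariant probability, which determines $h$ uniquely.

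\textbf{Main obstacle.} The technical heart is the second-order estimate relating the Hellinger affinities to \eqref{eq:square-summable}: one must keep the error genuinely $O(\|\log h_j\|_\infty^2)$ rather than merely $O\big((\sum_{l>j}\|\log g_l\|_\infty)^2\big)$, since $\sum_j(\sum_{l>j}\|\log g_l\|_\infty)^2$ need not converge under $\ell_1$-boundedness alone; this is what makes the bound $|1-c_j|\ll\|\log h_j\|_\infty$ (not just $\ll\sum_{l>j}\|\log g_l\|_\infty$) important. The subtle conceptual point, in (1), is that uniqueness needs ergodicity of the \emph{conformal} measure $\mu$, not of the invariant measure $\tilde\mu$, and that this ergodicity comes for free because $\mu$ is a product measure; the remaining steps are routine.
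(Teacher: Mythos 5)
Your proof is correct, but it reaches the conclusions by a genuinely different route in both halves. For part (1), the paper identifies $\lambda^{-1}\mathcal L_{\log g}$ with the transfer operator on $L^1(X,\mu)$, notes that $(T,\mu)$ is \emph{exact} (Kolmogorov's $0$--$1$ law for the product measure $\mu$), and then invokes Lin's criterion, $\|\lambda^{-n}\mathcal L^n_{\log g}(\phi)\|_1\to 0$ whenever $\int\phi\,d\mu=0$, applied to $\phi=h_1-h_2$; you instead use only ergodicity of $\mu$ (a weaker consequence of the same $0$--$1$ law), together with the positive-operator trick $\mathcal L h^{\pm}=\lambda h^{\pm}$ and uniqueness of $T$-invariant probabilities absolutely continuous with respect to an ergodic quasi-invariant measure. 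Your argument is more elementary (no appeal to Lin's mixing theorem) at the cost of the extra lattice-theoretic step; both are valid. For parts (2)--(3), the paper applies Kolmogorov's three-series theorem to $\sum_j\log h_\mu^{(j)}$ under $\mu$ (with the same second-order expansion $\log(1+x)-x=O(x^2)$ that you use for the Hellinger affinities), verifies $d\tilde\mu=h_\mu\,d\mu$ by a direct cylinder computation, and for part (3) invokes a Besicovitch/Vitali differentiation theorem to realize the a.s.\ limit as $d\tilde\mu_{\mathrm{ac}}/d\mu$ plus an ergodicity argument for the $0/1$ alternative; you instead observe that the partial products are exactly the density martingale $d\tilde\mu|_{\mathcal F_n}/d\mu|_{\mathcal F_n}$ and apply martingale convergence together with Kakutani's dichotomy, the affinities being controlled by precisely the quantity in \eqref{eq:square-summable} since $\sum_{j\ge i}\log g_j(a)=\log h_{i-1}(a)$. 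The Kakutani route is arguably cleaner: it delivers the $\mu$-a.s.\ existence of $h_\mu$, the identity $d\tilde\mu=h_\mu\,d\mu$, the fact that $\int h_\mu\,d\mu\in\{0,1\}$, and the equivalence-versus-singularity alternative in one package, replacing both the three-series theorem and the differentiation theorem; your flagged technical point (bounding $|1-c_j|$ by $\sup_a|h_j(a)-1|$ via $1/c_j=\int h_j\,d\mu_j$, so that the error stays $O(\|\log h_j\|_\infty^2)$) is exactly the right one and is handled correctly.
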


\begin{proof} 
\noindent  (i) In order to show uniqueness, we will identify $\lambda^{-1}\mathcal{L}_{\log g}$ with the transfer operator. As it was noted above, $\ell_1$-boundedness and summability imply that $\lambda^{-1}\mathcal{L}_{\log g}$ acts on uniformly continuous functions. It now follows from the conformality of $\mu$ that $\lambda^{-1}\mathcal{L}_{\log g}$ acts as the transfer operator on $L^1(X,\mu)$, that is $\int \psi \lambda^{-1}\mathcal{L}_{\log g}(\phi) d \mu  =  \int \psi\circ T \cdot \phi \, d \mu$ for all $\psi \in L^\infty(X,\mu)$ and $\phi \in L^1(X,\mu)$. A further important ingredient is exactness, that is triviality of the tail $\sigma$-field $\bigcap_{n>1} T^{-n} \mathcal{B}$ modulo $\mu$. As $\mu$ is a product measure, it follows from Kolmogorov's 0-1 law that $T$ is exact. Hence, by Lin's criterion for exactness (\cite{Lin1971}, Th. 4.4)
	\[ \lim_{n \to \infty} \|  \lambda^{-n}\mathcal{L}_{\log g}^n(\phi)\|_1 =0 \]
	for all $\phi \in L^1(X,\mu)$ with $\int  \phi\, d\mu=0$.	
	In particular, if $\mathcal{L}_{\log g}(h) = \lambda \phi$ and $\int  h\, d\mu=0$, then $ \|  h \|_1 =0$. Hence, if $h_1,h_2$ satisfy  $\mathcal{L}_{\log g}(h_i) = \lambda h_i$ and $\int h_i d\mu=1$, then $ \|  h_1-h_2 \|_1 =0$. This proves the uniqueness of $h$.
	
\noindent (ii) In order to show that $h_\mu$ exists, we employ Kolmogorov's three series theorem as in \cite[Corollary 1 on p. 84]{Luk75}. Hence we have to show that $|\sum  \int \log h_\mu^{(j)} d\mu_j|<\infty$ and $\sum \int (\log h_\mu^{(j)})^2 d\mu_j < \infty$, for 
\[h_\mu^{(j)} := \Delta_j \prod_{l=1}^\infty g_{l+j}(x_j), \hbox{ where } \Delta_j := \frac{\sum_{a \in \mathcal{A}} \prod_{l=1}^j g_l(a) }{\sum_{a \in \mathcal{A}} \prod_{l=1}^\infty g_l(a)}.\]
By   construction of $\mu$, we have  $\int  h_\mu^{(j)} d\mu_j =1$ and, by Jensen's inequality, $\int \log  h_\mu^{(j)} d\mu_j \leq 0$. In order to prove summability of the first sum, it therefore suffices to obtain a lower bound which follows from
 \begin{align*}
\int \log  h_\mu^{(j)} d\mu_j &= \int \sum_{l>j} \log g_ld\mu_j - \log \frac{1}{\Delta_j}
 \geq 
\int \sum_{l>j} \log g_l d\mu_j  + 1 - \frac{1}{\Delta_j} \\
&= 
\int  \sum_{l>j} \log g_l d\mu_j  +  
\frac{{\sum_{a \in \mathcal{A}} \prod_{l=1}^j g_l(a) (1 - \prod_{l>j} g_l(a))}}{{\sum_{a \in \mathcal{A}} \prod_{l=1}^j g_l(a) }}\\
&=
\int \sum_{l>j} \log g_l +  1-  \prod_{l>j} g_l \; d\mu_j = o\left( \sup_a (1- \textstyle \prod_{l>j} g_l(a))^2  \right)
 \end{align*}
where we used $\log (1+x) -x = o(x^2)$ in the last identity. Hence, if \eqref{eq:square-summable} holds, then 
$\sum \int \log h_\mu^{(j)} d\mu_j$ is summable. 
Using a similar argument, it easily can be seen that $\log \Delta_j \sim \int \sum_{l>j} \log g_l d\mu_j$. Hence, if \eqref{eq:square-summable} holds, then 
$\sum \int (\log h_\mu^{(j)})^2 d\mu_j$ is summable. In particular, $h_\mu$ exists $\mu$-a.s. by the three series theorem whereas it follows from $\int  h_\mu^{(j)} d\mu_j =1$ that $\int h_\mu d\mu =1$.

In order to show that $d\tilde{\mu} = h_\mu d\mu$ it suffices to show that $\tilde{\mu}([w]) = \int_{[w]} h_\mu d\mu$, for each $n \in \mathbb N$ and $w=(w_1,\ldots, w_n)$ with $w_j \in \mathcal{A}$. It follows from the product structure that 
\begin{align*} \int_{[w]} h_\mu d\mu  = \prod_{j=1}^n  \int_{[w_j]} h_\mu^{(j)} d\mu_j = \prod_{j=1}^n \Delta_j  \frac{\prod_{l=1}^j g_{l}(w_j)}{\sum_a \prod_{l=1}^j g_{l}(a)} \prod_{l=1}^\infty g_{l+j}(w_j)  = \tilde{\mu}([w]).
\end{align*} 
Hence, $h_\mu = d\tilde{\mu}([w])/d\mu$. As $\lambda^{-1}\mathcal{L}_{\log g}$ acts as the transfer operator and $d\tilde{\mu} = h_\mu d\mu$ is invariant, it follows for each test function $\phi \in L^\infty(X,\mu)$, that
	\[ \int \phi \, \lambda^{-1}\mathcal{L}_f(h_\mu) d\mu = \int \phi \circ T \cdot  h_m d\mu = \int\phi \, h_m d\mu.\]
Hence, $\mathcal{L}_f(h_\mu) = \lambda h_\mu$.

\noindent (iii) In order to prove the third part of the theorem, we will make use of the fact, that the shift space $X$ is a Besicovitch space and therefore, a measure differentiation theorem holds (see \cite{Bogachev:2007}). That is, the function
\[ D_\mu(\tilde{\mu}) ((x_j)) = \lim_{n \to \infty} \frac{\tilde{\mu}([x_1, \ldots , x_n])}{{\mu}([x_1, \ldots , x_n])}\]
exists and is finite $\mu$-a.e.. Moreover, $D_\mu(\tilde{\mu})$ is the Radon-Nikodym derivative $d\tilde{\mu}_{\hbox{\tiny ac}} /d\mu$, where $\tilde{\mu}_{\hbox{\tiny ac}}$ is the absolutely continuous part of $\tilde{\mu}$ with respect to $\mu$.   

In order to apply the result, 
observe that $D_\mu(\tilde{\mu}) =0$ implies that $\tilde{\mu}$ 
and $\mu$ are singular measures. 
However, if $\int D_\mu(\tilde{\mu}) d\mu = \tilde{\mu}_{\hbox{\tiny ac}}(X)>0$, 
it follows from ergodicity of $\tilde \mu$ that $\tilde{\mu}_{\hbox{\tiny ac}} =\tilde{\mu}$ 
and from 
\[ \frac{\tilde{\mu}([x_1, \ldots , x_n])}{{\mu}([x_1, \ldots , x_n])} = \prod_{j=1}^n \frac{\prod_{l=1}^\infty g_l(x_j) /(\sum_a \prod_{l=1}^\infty g_l(a))}{\prod_{l=1}^j g_l(x_j) /(\sum_a \prod_{l=1}^j g_l(a))} =  \prod_{j=1}^n \Delta_j \prod_{l=1}^\infty g_{l+j}(x_j) \]
that  $ D_\mu(\tilde{\mu})$ and $h_\mu$ are equal $\mu$-a.s.. 
It follows from ergodicity of $\mu$ that $ D_\mu(\tilde{\mu})> 0$ a.s.   
\end{proof} 


\section{Eigenfunctions in $L^1$-spaces}\label{sec:L1-spaces}

The Ruelle operator $\mathcal L_f$ with $f\in C(X)$ acts on classes of measurable functions modulo any Bernoulli measure $\rho$ on $X$ of the form
$\rho=\otimes_{i=1}^\infty \rho_0$,
where $\rho_0$ is any probability measure on $\mathcal A$. Indeed, note that $\rho$ is a shift invariant and ergodic measure on $(X,T)$.  Let $\phi,\psi$ be two functions which agree $\rho$ almost surely. Let $A=\{ \phi=\psi\}$. Then $\rho(A)=1$  and because of invariance of $\rho$ we may assume that $T^{-1}(A)\subset A$. Then by definition $\mathcal L_f\phi=\mathcal L_f\psi$ on $A$ (if the operator is well defined for these functions), so that $\mathcal L_f$ maps equivalence classes of measurable functions into such classes.

In this section we always assume that the alphabet $\mathcal A$ is finite. Then the Ruelle operator is always well defined on measurable functions. When the Ruelle operator is well defined in case of an infinite alphabet the following results can be adapted. The first theorem is a slightly modified and extended result from Theorem  \ref{theo:equilibrium}, part 2.

\begin{theorem}\label{l1theorem} \label{theo:Lp-existence}Let $g=e^f= \prod_{i=0}^\infty g_i$ be a balanced potential function.
	
	1. The Ruelle operator $\mathcal{L}_{f}$ defines canonically a
	bounded linear operator on
	$L^{p}(X,\rho)$ for all $1\leqslant p\le \infty$, where
	$\rho=\otimes_{i=1}^\infty \rho_0$ is any stationary Bernoulli measure.
	
	2. Assume that $g$ has $\ell_2$-summable tails, that is for some $k>1$,   
	$$M:=\sum_{i=k}^\infty \max_{a \in \mathcal{A}} \bigg({ \sum_{j=i}^\infty }\log g_j(a)\bigg)^2< \infty, $$
	and that $\rho$ is a stationary  Bernoulli measure with
	$$ \int \log g_k(x) \rho(dx)=0\qquad \forall k\ge 1.$$
	Then the function $h:X\to \mathbb R_+\cup\{\infty\}$ defined by
	$$ h((x_i)_{i\in \mathbb N})=  \prod_{i=1}^\infty h_i(x_i)\qquad h_i(a)=\prod_{k>i}g_k(a)\ \ a\in \mathcal A$$
	belongs to $L^p(X,\rho)$ for every $1\le p<\infty$ and is an
	almost surely positive eigenfunction of $\mathcal L_f:L^p(X,\rho)\to L^p(X,\rho)$  with eigenvalue
	$$  \lambda = g_0\sum_{a\in \mathcal A} \prod_{k=1}^\infty g_k(a).$$
\end{theorem}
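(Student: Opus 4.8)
The plan is to treat the two assertions separately. For assertion~1, I would first write out the action of $\mathcal L_f$ on a point $x=(x_i)_{i\ge1}$ using the product structure,
\[ \mathcal L_f\phi(x)=\sum_{a\in\mathcal A}g(ax)\,\phi(ax),\qquad g(ax)=g_0\,g_1(a)\,{\textstyle\prod_{i\ge1}}g_{i+1}(x_i). \]
Since $\mathcal A$ is finite, $g$ is bounded and $\|\mathcal L_f1\|_\infty\le|\mathcal A|\,\|g\|_\infty<\infty$, so the case $p=\infty$ is immediate from $|\mathcal L_f\phi|\le\|\phi\|_\infty\,\mathcal L_f1$. For $p=1$ I would use, for $\psi\ge0$ and any $a\in\mathcal A$, that stationarity of the product measure $\rho$ gives $\int_{[a]}\psi\,d\rho=\rho_0(a)\int\psi(ax)\,d\rho(x)$, whence
\[ \int\mathcal L_f\psi\,d\rho=\sum_{a\in\mathcal A}\int g(ax)\,\psi(ax)\,d\rho(x)\le\|g\|_\infty\sum_{a\in\mathcal A}\rho_0(a)^{-1}\int_{[a]}\psi\,d\rho\le\Big(\|g\|_\infty\sum_{a\in\mathcal A}\rho_0(a)^{-1}\Big)\int\psi\,d\rho; \]
applying this to $\psi=|\phi|$ bounds $\|\mathcal L_f\phi\|_1$. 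As $\mathcal L_f$ is a positive linear operator bounded on $L^1$ and on $L^\infty$, boundedness on every $L^p$ follows by interpolation (equivalently: Hölder applied to the finite sum over $a$ gives $|\mathcal L_f\phi|^p\le\|\mathcal L_f1\|_\infty^{p-1}\mathcal L_f(|\phi|^p)$, and one integrates using the $p=1$ estimate). Recall also that, by the discussion opening this section, $\mathcal L_f$ passes to $\rho$-equivalence classes since $\rho$ is shift-invariant, which is what makes the operator \emph{canonical}.

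For assertion~2, the key point is to regard $h=\prod_{i\ge1}h_i$ as a function of the coordinates $x_1,x_2,\dots$, which under $\rho$ are i.i.d.\ with law $\rho_0$; thus $\log h=\sum_{i\ge1}X_i$ with $X_i:=\log h_i(x_i)=\sum_{k>i}\log g_k(x_i)$ independent. The hypothesis $\int\log g_k\,d\rho=0$ for all $k\ge1$ gives, after interchanging the finite sum over $\mathcal A$ with the series, $\mathbb E[X_i]=\sum_{k>i}\int\log g_k\,d\rho_0=0$ for every $i$; and writing $c_i:=\|\log h_i\|_\infty=\max_{a\in\mathcal A}|\sum_{k>i}\log g_k(a)|$ one has $\mathrm{Var}(X_i)\le c_i^2$ with $\sum_ic_i^2<\infty$ by the $\ell_2$-summable-tails assumption (which, incidentally, already forces $\sum_k\log g_k(a)$ to converge, so each $h_i(a)$ is well defined). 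By Kolmogorov's three-series theorem---exactly as in the proof of Theorem~\ref{theo:equilibrium}, part~2, with $\rho$ in place of the uniform measure---the series $\sum_iX_i$ converges $\rho$-a.s., so $h=e^{\sum_iX_i}$ is positive and finite $\rho$-a.s. To upgrade this to $h\in L^p(X,\rho)$ for $1\le p<\infty$, I would estimate $\int h^p\,d\rho$ through the partial products: by independence $\int\prod_{i=1}^ne^{pX_i}\,d\rho=\prod_{i=1}^n\mathbb E[e^{pX_i}]$, each factor is $\ge1$ by Jensen, and since $X_i$ is mean-zero with $|X_i|\le c_i\to0$ one gets $\log\mathbb E[e^{pX_i}]\le C_pc_i^2$ (Hoeffding's lemma, or a second-order Taylor expansion of the exponential); hence the partial products stay bounded, and Fatou's lemma yields $\int h^p\,d\rho\le\prod_{i\ge1}\mathbb E[e^{pX_i}]<\infty$.

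It remains to verify $\mathcal L_fh=\lambda h$. A direct computation, using the telescoping identities $g_{j+1}(x_j)h_{j+1}(x_j)=\prod_{k>j}g_k(x_j)=h_j(x_j)$ and $g_1(a)h_1(a)=\prod_{k\ge1}g_k(a)$, gives $\mathcal L_fh(x)=g_0\big(\sum_{a\in\mathcal A}\prod_{k\ge1}g_k(a)\big)h(x)=\lambda h(x)$ at every $x$ where the products involved converge, i.e.\ $\rho$-a.e.; here one also uses that $h(a\,\cdot\,)$ is $\rho$-a.e.\ defined for each $a\in\mathcal A$, which holds because $\rho$ is a product measure (the preimage of a $\rho$-null set under $x\mapsto ax$ is $\rho$-null). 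Combined with assertion~1 and $h\in L^p$, this gives $\mathcal L_fh=\lambda h$ in $L^p(X,\rho)$ with $h>0$ $\rho$-a.s. I expect the genuine difficulty to be the passage from $\rho$-a.s.\ \emph{finiteness} of $h$ (all that the three-series theorem, and hence Theorem~\ref{theo:equilibrium}, part~2, delivers) to $L^p$-\emph{integrability}: this needs exponential-moment control of the independent increments $X_i$, and it is exactly here that the $\ell_2$-summable-tails hypothesis enters in an essential way. The remaining points---boundedness of $g$ on a finite alphabet and the almost-everywhere interpretation of $\mathcal L_fh$---are routine.
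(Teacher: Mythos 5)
Your proposal is correct and follows essentially the same route as the paper: the $\|g\|_\infty$-plus-stationarity bound for boundedness on $L^p$, Kolmogorov's three-series theorem for $\rho$-a.s.\ convergence of $\log h=\sum_i X_i$, exponential-moment control of the independent mean-zero bounded increments for $L^p$-membership, and the telescoping computation for $\mathcal L_f h=\lambda h$. Your execution of the exponential-moment step (factorizing $\mathbb E[e^{pX_i}]$ by independence and invoking Hoeffding's lemma with $\log\mathbb E[e^{pX_i}]\le C_p c_i^2$, then Fatou) is in fact cleaner and more robust than the paper's direct estimate of the moments $E|H|^p$, but it is the same idea.
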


\begin{proof} 1.
	We need to show that $\mathcal{L}_{f}$ sends
	$L^p(X,\rho)$ into itself.
	Indeed, let $1\leqslant p<\infty$ be fixed and
	$\varphi\in L^p(X,\rho)$.
	Bounding $f$ from above  by its supremum norm and using the triangular
	inequality we get
	\begin{align*}
	|\mathcal{L}_{f}(\varphi)(x)|^{p} &=|\sum_{a\in \mathcal A} \varphi(ax) g(ax)|^p
	\le \|g\|_{\infty}^p \sum_{a\in \mathcal A}|\varphi(ax)|^p.
	\end{align*}
	By the hypothesis
	\[
	\int_{X} |\varphi(x)|^p d\rho(x)<+\infty
	\]
	and since $\rho$ is a Bernoulli measure,
	$$\sum_{a\in \mathcal A} \int_X |\phi(ax)|^p \rho(dx)=|\mathcal A|  \int_X |\phi(x)|^p \rho(dx)<\infty,$$
	thus proving that $\mathcal{L}_{f}$ sends
	$L^p(X,\rho)$ to itself in
	case $1\le p<\infty$.
	The case $p=\infty$ is trivial
	because the Ruelle operator is just a finite sum of
	a product of two uniformly bounded functions.
	
	This estimate also shows that $\mathcal L_f$ can be considered as a  bounded operator acting on $L^p(X,\rho)$ for $1\le p\le \infty$.
	
	2. We first show that $h$ is almost surely finite. Similarly to the proof of Theorem  \ref{theo:equilibrium} the random variables $\log h_j$ satisfy $\int \log h_j d\rho=0$ and
	$$ \mbox{Var}(\log h_j) \le \max_{a\in \mathcal A} \left(\sum_{i>j} \log g_i(a)\right)^2.$$
	Again by Kolmogorov's three series theorem  (\cite[p. 87]{Luk75})
	$ \sum_{i=1}^\infty \log h_i$
	converges $\rho$ a.s..
	
	We show next that the moment generating function for $H=\sum_{n=1}^\infty  \log h_n$ exists on $\mathbb R$.
	Since $\log h_n(x)\le \max_{a\in \mathcal A} \sum_{i>n} g_i(a)$ it follows from independence of $\log h_n$ that    that for $p\ge 2$
	$$  E|H|^p \le  (M^{2p-2})^{1/2} (EH^2)^{1/2} \le M^{p-1}\sum_{n=1}^\infty E(\log h_n)^2 \le M^p$$
	whence
	$$ Ee^{tH}= \sum_{n=0}^\infty  \frac{t^n }{n!} EH^n\le \sum_{n=0}^\infty \frac{(tM)^n}{n!}<\infty.$$
	In particular, for $p\in \mathbb N$
	$$ Eh^p= Ee^{pH}<\infty$$
	and $h\in L^p(X,\rho)$.
	
	The proof is completed  similar
	to the one given in Theorem \ref{theo:eigenfunction-abstract}.
\end{proof}

We finally turn towards uniqueness questions of the eigenfunction $h$. We assume that the alphabet $\mathcal A$ is finite.

The uniqueness of the eigenfunction $h$ with respect to the eigenvalue $\lambda$ takes the following form.
Recall that
$$ \mu_0(a) =\lambda^{-1} \prod_{l=1}^\infty g_j(a)\qquad a\in \mathcal A$$
defines the equilibrium product measure. The operator
$$ P_{\mu_0} \psi(x_1,x_2,...)= \sum_{a\in \mathcal A} \psi(a,x_1,x_2,...)\mu_0(a)$$
acts on measurable functions  and on $\rho$-equivalence classes in $L^1(X,\rho)$, whence $P_{\mu_0}$  will be considered as an operator on $L^1(X,\rho)$.

\begin{theorem}\label{theo:uniqueness-of-h}
	Let $\mathcal A$ be a finite alphabet and $\rho$ be a product measure as in the previous theorem and $g=e^f$ be a balanced potential with $\ell_2$-summable tails. 
	Then  the Ruelle operator $\mathcal L_f:L^1(X,\rho)\to L^1(X,\rho)$ has (up to multiplication by constants) exactly one eigenfunction $h\in L^1(X,\rho)$ with respect to the eigenvalue
	$$ \lambda = g_0\sum_{a\in A}\prod_{k=1}^\infty g_k(a)$$
	if and only if $P_{\mu_0}$ is ergodic (i.e. has only one eigenfunction for the eigenvalue $1$ up to multiplication by constants).
\end{theorem}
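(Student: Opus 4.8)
The plan is to connect eigenfunctions of $\mathcal{L}_f$ on $L^1(X,\rho)$ with fixed points of $P_{\mu_0}$ via the normalization trick already exploited in Theorem \ref{theo:equilibrium}. First I would observe that since $g$ is balanced with $\ell_2$-summable tails, the eigenfunction $h=\prod h_i$ from Theorem \ref{theo:Lp-existence} exists $\rho$-a.s. and lies in $L^p(X,\rho)$ for all $p<\infty$; in particular $h$ is a genuine eigenfunction in $L^1(X,\rho)$, so at least one eigenfunction for $\lambda$ always exists. The question is therefore purely one of \emph{uniqueness}, and the natural device is the conjugation $\phi \mapsto h^{-1}\cdot\phi$, which turns $\lambda^{-1}\mathcal{L}_f$ into the normalized operator $\mathcal{L}_{\log\tilde g}$ with $\tilde g = gh/(\lambda\, h\circ T)$ as in \eqref{def:normalized_function}. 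A short computation — the same one appearing before Theorem \ref{theo:equilibrium} — shows $\tilde g(x)$ depends only on $x_1$ and equals $\prod_l g_l(x_1)/\sum_a\prod_l g_l(a) = \mu_0(x_1)\,\lambda\,/\,(\text{something})$; more precisely $\mathcal{L}_{\log\tilde g}\psi(x) = \sum_a \psi(ax)\tilde g(ax) = \sum_a \psi(ax)\mu_0(a)\cdot(\text{const})$, and after checking the constant is $1$ (which follows from $\mathcal{L}_{\log\tilde g}(1)=1$, noted in the proof of Theorem \ref{theo:equilibrium}) one gets exactly $\mathcal{L}_{\log \tilde g} = P_{\mu_0}$.

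With that identification in hand, the second step is the dictionary: if $\phi\in L^1(X,\rho)$ satisfies $\mathcal{L}_f\phi = \lambda\phi$, then $\psi := \phi/h$ is well-defined $\rho$-a.s. (since $h>0$ a.s.), lies in $L^1(X,\tilde\mu)$ where $d\tilde\mu = \text{(density)}\,d\rho$ — here one must check the measure-theoretic bookkeeping, namely that $\phi\in L^1(\rho)$ translates into $\psi\in L^1$ of the relevant measure, using that $h,h^{-1}$ have all moments under $\rho$ by the moment-generating-function estimate in Theorem \ref{theo:Lp-existence} and Hölder — and satisfies $P_{\mu_0}\psi = \psi$. Conversely any fixed point $\psi$ of $P_{\mu_0}$ gives an eigenfunction $\phi = h\psi$ of $\mathcal{L}_f$. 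This is a bijection (up to the integrability check) between the $\lambda$-eigenspace of $\mathcal{L}_f$ in $L^1(X,\rho)$ and the fixed-point space of $P_{\mu_0}$ in the corresponding $L^1$. Hence $\mathcal{L}_f$ has a one-dimensional $\lambda$-eigenspace if and only if $P_{\mu_0}$ has a one-dimensional fixed-point space, which is exactly the stated ergodicity condition (recalling $P_{\mu_0}$ is a Markov/averaging operator, so "only constant fixed functions" is the definition of ergodicity of its invariant measure $\mu_0$).

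The third step is to make sure the function spaces match up honestly, since $L^1(X,\rho)$ and "$L^1$ with the conjugated measure" are genuinely different spaces and the correspondence $\phi\leftrightarrow h\psi$ must not silently enlarge or shrink the eigenspace. I would handle this by fixing once and for all that we work in $L^1(X,\rho)$: given $\phi\in L^1(X,\rho)$ with $\mathcal{L}_f\phi=\lambda\phi$, set $\psi=\phi/h$; one shows $\psi\in L^1(X,\rho)$ as well — this is where integrability of $h^{-1}$ (equivalently, the moment bound $Ee^{tH}<\infty$ for all $t$, applied with $t<0$) enters — and that $\psi$ is a fixed point of $P_{\mu_0}$ on $L^1(X,\rho)$. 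In the other direction, a fixed point $\psi$ of $P_{\mu_0}$ in $L^1(X,\rho)$ is bounded (a fixed point of the averaging operator $P_{\mu_0}$ that depends on coordinates via a shift must, by the exactness/$0$--$1$-law argument already used in Theorem \ref{theo:invariant-function-unbalanced}, be a.s. constant when $\rho$ gives $\mu_0$ full support, but in general it is $\sigma(x_2,x_3,\dots)$-measurable and bounded when $\psi\in L^\infty$; for $\psi\in L^1$ one truncates), hence $\phi=h\psi\in L^1(X,\rho)$. The main obstacle is precisely this integrability matching — the conjugation by $h$ is only "free" because $h^{\pm 1}$ are in every $L^p$, and one must deploy the exponential-moment bound from Theorem \ref{theo:Lp-existence} together with Hölder's inequality to keep everything inside $L^1(X,\rho)$; once that is in place, the equivalence is a formal consequence of $\lambda^{-1}h^{-1}\mathcal{L}_f(h\,\cdot\,) = P_{\mu_0}$ acting on $L^1(X,\rho)$, and the final biconditional reads off immediately from comparing dimensions of the two eigenspaces.
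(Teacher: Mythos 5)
Your conjugation argument is exactly the paper's proof: dividing the eigenvalue equation $\mathcal L_f\phi=\lambda\phi$ by $h$ gives $P_{\mu_0}(\phi/h)=\phi/h$, and conversely $\psi\mapsto \psi h$ sends fixed points of $P_{\mu_0}$ to $\lambda$-eigenfunctions, so the biconditional reads off from the correspondence of the two eigenspaces. The integrability bookkeeping you spend most of your effort on is simply not carried out in the paper (which treats $P_{\mu_0}$ as acting on measurable functions modulo $\rho$ and divides by $h$ without further comment), so your extra care is reasonable but not part of the published argument.
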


\begin{proof}
	We only need to show uniqueness. Let $\phi\in L^1(X,\rho)$ be an eigenfunction for the eigenvalue $\lambda$.
	Let $X_1,X_2,...$ denote the i.i.d. coordinate process determining $\rho$.  Then
	\begin{eqnarray*}
		&&\phi(X_1,X_2,...)= \lambda^{-1} \mathcal L_f \phi (X_1,X_2,...)\\
		&& \quad = \lambda^{-1} \sum_{a \in \mathcal A} \phi(a, X_1,X_2,...)\prod_{k=1}^\infty  g_{k+1}(X_k)g_1(a)
	\end{eqnarray*}
	and dividing by $h(X_1,X_2,...)$ yields
	\begin{eqnarray*}
		\frac{\phi(X_1,X_2,...)}{h(X_1,X_2,...)} &=& \lambda^{-1}\sum_{a\in \mathcal A} \frac {\phi(a,X_1,X_2,...)}{h(X_1,X_2,...)} \prod_{k=1}^\infty g_{k+1}(X_k) g_1(a)\\
		&=& \lambda^{-1}\sum_{a\in\mathcal A} \frac{\phi(a,X_1,X_2,...)}{\prod_{k=1}^\infty\prod_{j=k+2}^\infty g_j(X_k)} g_1(a)\\
		&=&  \lambda^{-1}\sum_{a\in\mathcal A} \frac{\phi(a,X_1,X_2,...)}{h(a,X_1,X_2,...)} \prod_{l=1}^\infty g_l(a)\\
		&=& P_{\mu_0} \frac {\phi}{h}(X_1,X_2,...)
	\end{eqnarray*}
	Therefore $\phi/h$ is an eigenfunction for the eigenvalue $1$ (note that $P_{\mu_0}1=1$).
	Thus if $P_{\mu_0}$ is ergodic, $\phi/h$ is constant.
	
	Conversely, the above equation shows that if $P_{\mu_0}$ has another eigenfunction $\psi$, then $\psi h$ is an eigenfunction for $\mathcal L_f$, proving the theorem.
\end{proof}

\section{The leading example} \label{exa}
We return to the class of potentials defined in Example \ref{ex:simplified-dyson}. Recall that it uses the  alphabet $\mathcal{A}=\{-1,1\}$
and potentials of the form
\begin{equation} \label{ii}
f(x)= \sum_{n=1}^{\infty} \frac{x_n}{n^{\gamma}},
\quad \gamma>1.
\end{equation}
Observe that $g(x) := e^{-f(x)}$ satisfies $\inf_x g(x)  = \exp(-\sum_n n^\gamma)>0$, whence the potential of product type $g$ is bounded from below. We also have  that  $g$ is balanced and $\ell_1$-bounded. Hence, we obtain explicit expressions for the conformal measure, the equilibrium state and $\lambda$ by applying Theorems \ref{theo:existence}, \ref{theo:eigenfunction-abstract} and \ref{theo:equilibrium}. In here, $\zeta(\gamma)$ refers to the Riemann $\zeta$-function  $\zeta(\gamma):= \sum_{j=1}^{\infty} j^{-\gamma}$.
\begin{enumerate}
	\item
	The conformal measure
	$\mu=\otimes_{i=1}^\infty \mu_i$
	is of product type, where
	\begin{equation}\label{eq:conformal-measure-example}
	\mu_i(\{1\})
	=
	\frac{\exp(\sum_{j=1}^{i}j^{-\gamma})}{2\cosh(\sum_{j=1}^{i} j^{-\gamma}) },
	\qquad
	\mu_{i}(\{-1\})
	=
	\frac{\exp(-\sum_{j=1}^{i}j^{-\gamma})}{2\cosh(\sum_{j=1}^{i} j^{-\gamma}) }.
	\end{equation}
	
	\item
	The conformality parameter is equal to $\lambda = 2\cosh(\zeta(\gamma))$.
	
	\item
	The equilibrium state
	$\tilde{\mu} = \otimes_{i=1}^{\infty} \tilde{\mu}_i$ is a Bernoulli measure (that is a $ \tilde{\mu}_i =  \tilde{\mu}_j$ for all $i,j$).
	The measure $\tilde{\mu}_0:= \tilde{\mu}_i$ is given by
	\begin{equation}\label{eq:equilibrium-example}
	\tilde{\mu}_0(\{1\})
	=
	\frac{\exp(\zeta(\gamma))}{2\cosh(\zeta(\gamma)) },
	\qquad
	\tilde{\mu}_0(\{-1\})
	=
	\frac{\exp(-\zeta(\gamma))}{2\cosh(\zeta(\gamma)) }.
	\end{equation}
\end{enumerate}

\subsection{Bowen's class ($\gamma>2$)}
Recall that it has been shown above that $f$ is in Bowen's class if and only if $\gamma>2$. In this situation, we obtain a stronger result. Namely, by Theorem \ref{theo:uniqueness}, the  measure $\mu$ above is the unique conformal measure. In particular, $\lambda$ is also uniquely determined by $\mathcal{L}^\ast_{f}(\mu)= \lambda \mu$. Moreover, the
function $h((x_i))=\prod_{i\geq 1} h_i(x_i)$ defined by
\begin{equation} \label{eq:eigenfunction} h_n(x_n) := \exp (\alpha_n x_n), \qquad   \alpha_n := \sum_{j=n+1}^{\infty}j^{-\gamma},
\end{equation}
is an eigenfunction of product type.
This function is the unique function with $\mathcal{L}_{f}(h)= \lambda h$, and the equilibrium state is given by, as usual, $d\tilde{\mu} = h d\mu$.
It is worth noting  that for $\gamma>2$, Walters showed in \cite{Wal01} that a Perron-Frobenius theorem holds in a more general situation. Furthermore, the main result in \cite{BFG99} is applicable to our example and implies polynomial decay of $\mathcal{L}_{f}$ for these parameters of $\gamma$.

\subsection{The case $3/2 < \gamma \leq 2$} \label{notB}

We now consider the case of  $3/2<\gamma\leq 2$ which is related to the second case of Theorem \ref{theo:equilibrium} and Theorem \ref{theo:invariant-function-unbalanced}.
Namely, as the coefficients $h_n$ defined in
\eqref{eq:eigenfunction} satisfy $|\log h_n|\sim n^{1-\gamma}$, it follows that $\sum_{m > n} |\log h_m|^2 \sim n^{2-2\gamma}$.
Hence, $\sum_n \sum_{m > n} |\log h_m|^2 $ converges
iff $2\gamma-2>1$ which is equivalent to $\gamma>3/2$.
Therefore, if $\gamma>3/2$, the function
\begin{align}\label{L1-eigenfunction}
h_\rho(x)=
\exp\left(\sum_{i=1}^{\infty}\alpha_i x_i\right), \quad 
\end{align}
is $\rho$-almost surely well defined, where
$\rho=\otimes_{i=1}^\infty \rho_0$
is the Bernoulli product measure with parameter $1/2$ on $X= \{-1,1\}^\mathbb{N}$. With respect to $\mu$, it follows from Theorem \ref{theo:invariant-function-unbalanced} that 
\begin{align}\label{L1-eigenfunction-2}
h_\mu(x)= \exp\left(\sum_{i=1}^{\infty} \alpha_i x_i + \log \frac{\cosh(\sum_{j=1}^{i} j^{-\gamma})}{\cosh(\zeta({\gamma}))}  \right) 
\end{align}
is $\mu$-almost surely well defined. Furthermore, both functions satisfy 
 the functional equation $\mathcal{L}_{f}(h) = \lambda h$, for $\lambda =2\cosh(\zeta(\gamma))$.

\begin{theorem}\label{theorem:oscilation} Let $1 < \gamma \leq 2$ and $\mu$ as in \eqref{eq:conformal-measure-example} and $\tilde{\mu}$ as in \eqref{eq:equilibrium-example}.
	\begin{enumerate}
		\item  If $3/2 < \gamma \leq 2$, then $h_\rho(x) = \infty$ for $\mu$-a.e.  $x \in X$, and $h_\mu(x) = 0$ for $\rho$-a.e. $x \in X$. 
		\item If $\gamma>3/2$, then $\mu$ and $\tilde\mu$ are absolutely continuous, and $d\tilde \mu = h_\mu d\mu$.
		\item If $1< \gamma \leq 3/2$, then  $\mu$, $\tilde{\mu}$ and $\rho$ are pairwise singular.
	   \item  If $ 3/2 < \gamma \leq 2$, then, for any open set $A$, we have
	   \begin{align*}
	    \hbox{ess-inf}_\rho \{h_\rho(x) : x \in A\}  = \hbox{ess-inf}_\mu \{h_\mu(x) : x \in A\}   = 0, \\ 
	    \hbox{ess-sup}_\rho \{h_\rho(x) : x \in A\}  = \hbox{ess-sup}_\mu \{h_\mu(x) : x \in A\}   = \infty. 
	   \end{align*}
		In particular, neither $h_\rho$ nor $h_\mu$ can be extended to a (locally) continuous function.
	\end{enumerate}
\end{theorem}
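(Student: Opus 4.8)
The plan is to analyze the sum $S(x) := \sum_{i=1}^\infty \alpha_i x_i$ with $\alpha_i = \sum_{j>i} j^{-\gamma} \sim \frac{1}{\gamma-1} i^{1-\gamma}$ as a random variable, both under the Bernoulli$(1/2)$ measure $\rho$ and under the conformal measure $\mu$, whose marginals $\mu_i(\{\pm1\})$ converge to the $\tilde\mu_0$-weights exponentially fast. The central dichotomy comes from whether $\sum_i \alpha_i^2$ converges: for $3/2<\gamma\le 2$ one has $\alpha_i^2\sim c\, i^{2-2\gamma}$ with $2\gamma-2\le 2$, so $\sum_i\alpha_i^2=\infty$, and by the Kolmogorov three-series theorem (or, more directly, by the two-series criterion applied to the centered variables) the series $\sum_i \alpha_i x_i$ diverges $\rho$-a.s. and $\mu$-a.s.\ --- but it diverges to $+\infty$ in one case and $-\infty$ in the other because the two measures have different means. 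Concretely, under $\rho$ the variables $\alpha_i x_i$ are centered, so partial sums oscillate, but $h_\mu$ contains the extra drift term $\sum_i \log\frac{\cosh(\sum_{j\le i} j^{-\gamma})}{\cosh\zeta(\gamma)}$, which is a convergent \emph{negative} series (since $\sum_{j\le i}j^{-\gamma}\uparrow\zeta(\gamma)$, each log term is negative, and $\log\cosh$ is Lipschitz so the series is dominated by $\sum_i(\zeta(\gamma)-\sum_{j\le i}j^{-\gamma}) = \sum_i\alpha_i<\infty$); meanwhile $\sum_i\alpha_i x_i$ diverges without a definite sign, so $h_\mu=\exp(\text{divergent oscillating}+\text{finite})=0$ fails --- I must be more careful here. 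The correct mechanism: under $\rho$, $\sum\alpha_i x_i$ has $\limsup=+\infty$ and $\liminf=-\infty$ $\rho$-a.s., so $h_\rho=\exp(\sum\alpha_i x_i)$ has no limit; the claim $h_\rho=\infty$ $\mu$-a.e.\ and $h_\mu=0$ $\rho$-a.e.\ must instead be read through the defining product $\prod_j h_j^{(\cdot)}$ with the renormalizing factors $\Delta_j$, i.e.\ one compares the two product measures directly.

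So the cleaner route, which I would actually carry out, is via mutual singularity of $\mu$ and $\rho$ (resp.\ $\tilde\mu$). First I would record that all three measures are Bernoulli with explicitly known marginals, and that $\rho$ has marginal $(1/2,1/2)$ while $\mu_i(\{1\})\to \tilde\mu_0(\{1\})=\frac{e^{\zeta(\gamma)}}{2\cosh\zeta(\gamma)}\ne 1/2$. By Kakutani's dichotomy for product measures, $\mu\perp\rho$ iff $\sum_i d_H(\mu_i,\rho_i)^2=\infty$ where $d_H$ is the Hellinger distance; since $\mu_i(\{1\})-1/2\sim c\,\alpha_i$ and $d_H(\mu_i,\rho_i)^2\asymp(\mu_i(\{1\})-1/2)^2\asymp\alpha_i^2$, we get $\mu\perp\rho$ iff $\sum_i\alpha_i^2=\infty$ iff $\gamma\le 3/2$. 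That gives part (3) (together with $\tilde\mu\approx\mu$ for $\gamma>3/2$ from Theorem~\ref{theo:invariant-function-unbalanced}, and $\tilde\mu\perp\rho$ likewise from Kakutani since $\tilde\mu_0(\{1\})\ne1/2$ always, combined with $\mu\perp\rho$ to separate all three). Part (2) is exactly Theorem~\ref{theo:invariant-function-unbalanced}(ii)--(iii) applied to this example, since \eqref{eq:square-summable} holds for $\gamma>3/2$; I would just cite it and note $h_\mu$ is the explicit formula \eqref{L1-eigenfunction-2}. For part (1): $h_\rho$ is built from $\rho$-independent factors $h_i(x_i)$ with $\mathbb E_\rho\log h_i=0$ but it is really the $\mu$-density's reciprocal direction --- the precise statement is that $h_\rho$ (the balanced eigenfunction) relates to the $\mu$-eigenfunction $h_\mu$ by $h_\rho = h_\mu \cdot \frac{d\mu}{d\tilde\mu}\cdot(\text{const})$ formally, so $h_\rho$ being a.s.\ $\infty$ under $\mu$ is the statement $\frac{d\mu}{d\tilde\mu}$-type Radon--Nikodym density blows up, which follows from $\mu\not\ll$ (the measure for which $h_\rho$ is the density) in the regime $3/2<\gamma\le 2$; I would make this rigorous by showing $\prod_{j=1}^n \Delta_j\prod_l g_{l+j}(x_j)\to\infty$ $\mu$-a.s.\ via the strong law / three-series argument, exactly mirroring the computation in the proof of Theorem~\ref{theo:invariant-function-unbalanced}(iii) but now observing $\int D_\mu(\rho)\,d\mu$-type integral diverges, equivalently using that the Hellinger sum diverges so the likelihood ratio degenerates. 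Symmetrically $h_\mu=0$ $\rho$-a.s.

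For part (4), the oscillation statement, the plan is a Borel--Cantelli / tail-triviality argument localized to a cylinder. Fix an open set $A$; it contains a cylinder $[w_1,\dots,w_N]$. I would show that, conditional on the first $N$ coordinates being $w$, the tail sum $\sum_{i>N}\alpha_i x_i$ still has $\limsup=+\infty$ and $\liminf=-\infty$ almost surely (under both $\rho$ and $\mu$), because $\sum_{i>N}\alpha_i=\infty$ (as $\gamma\le 2\le$ anything, indeed $\alpha_i\sim c i^{1-\gamma}$ with $1-\gamma\ge -1$ so $\sum\alpha_i=\infty$) --- for instance, by the second Borel--Cantelli lemma the events $\{x_i=+1 \text{ on a long block}\}$ occur infinitely often, making partial sums exceed any level, and the same with $-1$; alternatively invoke the Hewitt--Savage or Kolmogorov 0--1 law to see $\limsup$ and $\liminf$ of the tail series are a.s.\ constant, then rule out finiteness using $\sum\alpha_i^2=\infty$ via the three-series theorem. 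Hence on $A$, $h_\rho=\exp(\sum\alpha_i x_i)$ has essential sup $\infty$ and essential inf $0$, and the same for $h_\mu$ after absorbing the convergent drift term (which is uniformly bounded, hence changes neither the essential sup nor inf being $\infty$/$0$). The ``in particular'' clause is then immediate: a function that is $0$ on a dense set and $\infty$ on a dense set, within every open set, admits no continuous representative.

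The main obstacle I anticipate is part (1): pinning down exactly in what sense ``$h_\rho(x)=\infty$ for $\mu$-a.e.\ $x$'' holds, given that $h_\rho$ is only defined $\rho$-a.s.\ as a convergent product and ``$\mu$-a.e.'' refers to a mutually singular measure. The resolution is that $h_\rho$ as a \emph{formal} infinite product $\prod_i e^{\alpha_i x_i}$ has well-defined partial products $P_n(x)=e^{\sum_{i\le n}\alpha_i x_i}$ for every $x$, and the assertion is $\lim_n P_n(x)=\infty$ for $\mu$-a.e.\ $x$; this is where I would invoke that under $\mu$ the coordinates $x_i$ favor $+1$ with a bias $\sim\alpha_i$ summing to a divergent series of squares, so by a one-sided strong law (or the Kolmogorov three-series theorem applied to the \emph{non-centered} variables under $\mu$, whose means sum to $+\infty$ while centered variances are controlled as in the proof of Theorem~\ref{theo:invariant-function-unbalanced}) the partial sums $\to+\infty$ $\mu$-a.s. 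Making the constants and the comparison $\mu_i(\{1\})-1/2 \asymp \alpha_i$ precise (using $\tanh$ and the exponential convergence $\sum_{j\le i}j^{-\gamma}\to\zeta(\gamma)$) is the routine but load-bearing computation; everything else is bookkeeping around Kakutani's theorem and the three-series theorem already used above.
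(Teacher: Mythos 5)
Your proposal contains the right ingredients (the three-series theorem, the drift of the means under $\mu$, a Kakutani-type dichotomy, localization to cylinders), but the load-bearing dichotomy is stated backwards and this propagates into genuine errors. You assert that for $3/2<\gamma\le 2$ one has $\sum_i\alpha_i^2=\infty$ and hence that $\sum_i\alpha_i x_i$ diverges with oscillation $\rho$-a.s. In fact $\alpha_i^2\asymp i^{2-2\gamma}$ is summable precisely when $\gamma>3/2$, so in the regime of parts (1) and (4) the series \emph{converges} $\rho$-a.s.\ --- that is exactly why $h_\rho$ exists $\rho$-a.s.\ at all. Conversely, you claim in a parenthetical that $\sum_i\alpha_i<\infty$; this is false for $\gamma\le 2$ (the sum $\sum_i i^{1-\gamma}$ diverges), and that divergence is the engine of part (1): under $\mu$ the means $E_{\mu_i}(\alpha_i x_i)=\alpha_i\tanh\bigl(\sum_{j\le i}j^{-\gamma}\bigr)\sim c\,i^{1-\gamma}$ sum to $+\infty$ while the variances $\alpha_i^2/\cosh^2(\cdot)$ sum finitely, so the three-series theorem forces $\sum_i\alpha_i x_i\to+\infty$ $\mu$-a.s. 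You do land on this mechanism in your final paragraph, and it is the paper's proof of (1), but the intermediate reasoning contradicts it. The same sign error undermines part (4): your plan to show $\limsup_n\sum_{i\le n}\alpha_i x_i=+\infty$ a.s.\ on each cylinder cannot work, since the partial sums converge a.s.; the correct (and the paper's) argument is that the a.s.\ limit is an \emph{essentially unbounded} random variable on every cylinder --- force a long block of $+1$'s after the cylinder (probability $2^{-n}>0$) to push the partial sum above any $M$, then use symmetry to see the remaining tail is $\ge 0$ with probability at least $1/2$. Your second Borel--Cantelli variant also fails quantitatively, since a block of fixed length starting far out contributes only $o(1)$ to the sum.

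The Kakutani step for part (3) is applied to the wrong pair of measures. Since $\mu_i(\{1\})-\tfrac12\to\tfrac12\tanh\zeta(\gamma)\ne 0$, the Hellinger distances $d_H(\mu_i,\rho_i)$ do not tend to zero and $\mu\perp\rho$ for \emph{every} $\gamma>1$; your claimed equivalence ``$\mu\perp\rho$ iff $\gamma\le 3/2$'' is false and would contradict part (1), which already exhibits $\mu\perp\rho$ for $3/2<\gamma\le 2$. The pair whose marginal discrepancy is of order $\alpha_i$ is $(\mu_i,\tilde\mu_0)$, and it is $\mu\perp\tilde\mu$ that is characterized by $\sum_i\alpha_i^2=\infty$, i.e.\ $\gamma\le 3/2$. (The paper reaches the same conclusion via Theorem \ref{theo:invariant-function-unbalanced}: the variance sum for $\log h_\mu$ diverges, so $h_\mu=0$ $\mu$-a.s.\ and the measures are singular.) Parts (2) and the ``in particular'' clause of (4) are cited or deduced correctly, but as written the proposal does not constitute a proof of (1), (3) or (4).
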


\begin{proof} The first assertion is an application of Kolmogorov's three series theorem as in \cite[p. 87]{Luk75}. 
By a direct calculation,
\begin{align*}
E_{\mu_i} (\log h_\rho^{(i)}) &=  \int \alpha_i x \,d\mu_{i}(x)
 =  \alpha_i\frac{\exp({ \sum_{j=1}^{i} j^{-\gamma})})- \exp(-{ \sum_{j=1}^{i} j^{-\gamma})})}{2  \cosh(\sum_{j=1}^{i} j^{-\gamma})} \\
& = \alpha_i \tanh(\textstyle \sum_{j=1}^{i} j^{-\gamma}) \sim  \frac{ \tanh(\zeta(\gamma))}{(\gamma-1)} i^{1-\gamma}  \\
\mbox{Var}_{\mu_i} (\log h_\rho^{(i)}) &= \int (\alpha_i x)^2\, d\mu_i({x}) -  (\alpha_i \tanh(\textstyle \sum_{j=1}^{i} j^{-\gamma}))^2
= \alpha_i^2(1-  \tanh^2(\textstyle \sum_{j=1}^{i} j^{-\gamma}))\\
& = \frac{\alpha_i^2}{\cosh^2(\sum_{j=1}^{i} j^{-\gamma})} \sim  \frac{i^{2-2\gamma}}{(\gamma-1)^2\cosh^2(\zeta(\gamma))} 
\end{align*}
For $3/2 <  \gamma \leq 2$, it follows that $\sum_i E_{\mu_i} (\log h_\rho^{(i)}) = \infty$ and $\sum_i \mbox{Var}_{\mu_i} (\log h_\rho^{(i)}) < \infty$. This then implies that $\sum_{i=1}^\infty (\log h_\rho^{(i)} -  E_{\mu_i}(\log h_\rho^{(i)}))$ converges $\mu$-a.s. (\cite[p. 87]{Luk75}). Hence, $h_\rho=\infty$ $\mu$-a.s. 
In order to prove the  statement for  $h_\mu$ with respect to $\rho$, we apply the same arguments. Namely, the  assertion follows from   
\begin{align*}
 E_{\rho} \left(\log h_\rho^{(i)} + \log \frac{\cosh(\sum_{j=1}^{i} j^{-\gamma})}{\cosh(\zeta({\gamma}))}  \right) =  \log \frac{\cosh(\sum_{j=1}^{i} j^{-\gamma})}{\cosh(\zeta({\gamma}))} 
 \sim -\frac{\tanh{(\zeta({\gamma}))}  i^{1 -\gamma}}{\gamma -1} 
 \end{align*}
 and $\mbox{Var}_{\rho}(\log h_\mu^{(i)}) = \mbox{Var}_{\rho}(\log h_\rho^{(i)}) = \alpha_i^2 $.

The second and the third are applications of Theorem \ref{theo:invariant-function-unbalanced} and the three series theorem as in \cite[p. 88]{Luk75}. Namely, we have that
\begin{align*}
\mbox{Var}_{\mu_i}(\log h_\mu^{(i)}) =\mbox{Var}_{\mu_i} (\log h_\rho^{(i)})  \sim  \frac{i^{2-2\gamma}}{(\gamma-1)^2\cosh^2(\zeta(\gamma))}.\end{align*} 
Hence, if $\gamma\leq 3/2$, then $\log h_\mu$ does not exist in $(-\infty , \infty)$. However, $h_\mu$ exists by  Theorem \ref{theo:invariant-function-unbalanced} also in this case, but might be equal to $0$. Hence, $h_\mu=0$ and $\mu$ and $\tilde{\mu}$ are pairwise singular. Assertion (iii) then follows from the obvious fact that 
$\rho$ is singular with respect to both $\mu$ and $\tilde{\mu}$.
Furthermore, part (ii) is a consequence of Theorem \ref{theo:invariant-function-unbalanced} as $h_\mu>0$ for $\gamma>3/2$.

It remains to show the last part. We begin with the proof for $h_\rho$.
	As $A$ is open, there exist $m\in \mathbb N$ and $a_1, \ldots, a_m\in \{-1,1\}$ 
	such that $[a_1, \ldots, a_m] \subset A$. 
	In order to show that $\hbox{ess-sup}_\rho h_\rho(x) = \infty$, 
	it remains to show that,
	for all $M>0$,
	\[
	\rho\left(\left\{ x\in [a_1, \ldots, a_m] :
	\textstyle \sum_{i=1}^{\infty} \alpha_i x_i >	M \right\}
	\right) > 0.
	\]
	In order to do so, note that $\gamma \leq 2$ implies  that $\sum_{i=1}^\infty \alpha_i = \infty$. Hence, for each $M > 0$, there exists $n >m$ such that $-\alpha_{1}-\ldots-\alpha_m+\alpha_{m+1} + \ldots + \alpha_{n} > M$. For
	$\mathfrak{C} := \{x \in [a_1,\ldots,a_m]:x_{m+1} = \ldots =x_n=1\}$, we have
	\[
	\rho\left(
	\mathfrak{C}
	\cap
	\left\{
	\textstyle
	\sum_{i=n+1}^{\infty}
	\alpha_i x_i
	\geqslant 0
	\right\}
	\right)
	\leq
	\rho\left(
	\textstyle 	
	\sum_{i=1}^{\infty}
	\alpha_i x_i
	>
	M
	\right).
	\]
	Observe that the events $\mathfrak{C}$ and $
	\{
	\sum_{i=n+1}^{\infty}
	\alpha_i x_i
	\geqslant 0
	\}
	$
	are independent, that $\rho(\mathfrak{C}) = 2^{-n}$ and that, by symmetry,
	$\rho( \{\textstyle
	\sum_{i=n+1}^{\infty}
	\alpha_i x_i
	\geqslant 0 \})\geq 1/2$. Hence,
	\[ \rho\left(
	\mathfrak{C}
	\cap
	\left\{
	\textstyle
	\sum_{i=n+1}^{\infty}
	\alpha_i x_i
	\geqslant 0
	\right\}
	\right)
	=
	\rho(\mathfrak{C})
	\cdot
	\rho(
	\{\textstyle
	\sum_{i=n+1}^{\infty}
	\alpha_i x_i
	\geqslant 0 \}) \geq 2^{1-n} >0.
	\]
	Hence, $\hbox{ess-sup}_\rho \{h_\rho(x): {x \in A}\} \geq e^M$. The proof of $\hbox{ess-inf}_\rho \{h_\rho(x): {x \in A}\} =0$ follows by substituting $\mathfrak{C}$ with $\{x \in [a_1,\ldots,a_m]: x_{m+1} = \ldots =x_n=-1\}$, where $n$ is chosen such that $\alpha_{1} + \ldots + \alpha_{m} -  (\alpha_{m+1} +  \cdots + \alpha_{n}) < -M$.
	
	In order to prove the local unboundedness of $h_\mu$, we make use of (ii). Namely, in order to obtain that $\hbox{ess-sup}_\mu h_\mu(x) = \infty$, it suffices to show that, for  $[a_1, \ldots, a_m] \subset A$ and $w_n := (a_1, \ldots, a_m,1,1, \ldots,1) \in \mathcal{A}^{m+n}$, we have
	\[\lim_{n \to \infty} \frac{\int_{[w_n]} h_\mu d\mu}{\mu ([w_n])} =  \lim_{n \to \infty} \frac{\tilde \mu ([w_n])}{\mu ([w_n])} = \lim_{n \to \infty} \frac{\tilde \mu ([a_1 \cdots a_m])}{\mu ([a_1 \cdots a_m])} \frac{\tilde\mu([1,\ldots,1])}{\mu([1,\ldots,1])} = \infty, \]
where $(1,\ldots,1)$ stands for the word of length $n$ with all entries equal to one. In order to verify this condition, note that $ \log({\cosh(\sum_{l=1}^j l^{-\gamma})}/{\cosh(\zeta(\gamma))}) \sim -\tanh(\zeta(\gamma)) \sum_{l=j+1}^\infty l^{-\gamma} $. Hence,
\begin{align*}
\sum_{j=m+1}^{m+n} \log  \frac{\tilde\mu_j(1)}{\mu_j(1)} 
& =
\sum_{j=m+1}^{m+n} \log \frac{\exp \sum_{l=1}^\infty l^{-\gamma} }{2\cosh(\zeta(\gamma))}  
- \log\frac{\exp \sum_{l=1}^j l^{-\gamma}}{2\cosh(\sum_{l=1}^j l^{-\gamma})} \\
&=   \sum_{j=m+1}^{m+n} \left( \sum_{l=j+1}^\infty l^{-\gamma} + \log \frac{\cosh(\sum_{l=1}^j l^{-\gamma})}{\cosh(\zeta(\gamma))}\right)\\
& \asymp  \sum_{j=m+1}^{m+n}  (1- \tanh(\zeta(\gamma))) \sum_{l=j+1}^\infty l^{-\gamma} \xrightarrow{n\to \infty} \infty.
\end{align*}	
Hence, $\hbox{ess-sup}_\mu h_\mu(x) = \infty$. The proof of $\hbox{ess-inf}_\mu h_\mu(x) =0$ is the same.  
\end{proof}

We turn our attention to $h$ as an element of $L^1(X,\rho)$. The following results are Theorems \ref{theo:Lp-existence} and \ref{theo:uniqueness-of-h} adapted to our example.

\begin{theorem} \label{theo:example} For $\gamma > 3/2$ and $h$ as in \eqref{L1-eigenfunction}, the following holds.
	\begin{enumerate}
		\item The Ruelle operator $\mathcal{L}_{f}$ defines canonically a
		bounded linear operator on
		$L^{p}(X,\rho)$ for all $1\leqslant p\le \infty$.
		\item The function $h$ defined above belongs to $L^p(X,\rho)$ for every $1\le p<\infty$ and is the unique
		eigenfunction of $\mathcal L_f:L^p(X,\rho)\to L^p(X,\rho)$  with eigenvalue  $\lambda=2\cosh(\zeta(\gamma))$
		if and only if  the operator
		\begin{eqnarray*}
			P\phi(x_1,x_2,...) =
			\frac 1{\lambda}\left[ \phi(1,x_1,x_2,...)\mbox{\rm exp}(\zeta(\gamma))  +\phi(-1,x_1,x_2,...)
			\mbox{\rm exp}(-\zeta(\gamma))\right]
		\end{eqnarray*}
		acting on $L^1(X,\rho)$ is ergodic.
	\end{enumerate}
\end{theorem}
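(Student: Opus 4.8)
The plan is to read Theorem~\ref{theo:example} as the specialization of Theorems~\ref{theo:Lp-existence} and~\ref{theo:uniqueness-of-h} to the potential $f(x)=\sum_n x_n n^{-\gamma}$, so that the entire task is to verify the hypotheses of those two theorems and to identify the resulting objects explicitly. Write $g=e^f=\prod_{i\ge 0}g_i$ with $g_0=1$ and $g_n(a)=e^{a n^{-\gamma}}$ for $a\in\{-1,1\}$. Then $g$ is balanced, since $g_n(1)g_n(-1)=1$ for every $n$; it is $\ell_1$-bounded, since $\sum_{n\ge 2}\|\log g_n\|_\infty=\sum_{n\ge 2}n^{-\gamma}<\infty$ for $\gamma>1$; and it is summable because $\mathcal A$ is finite. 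Moreover the uniform Bernoulli measure $\rho=\otimes_i\rho_0$ satisfies $\int\log g_k\,d\rho=0$ for all $k$, as $\log g_k$ is odd in the $k$-th coordinate and $\rho_0$ is symmetric; this is exactly the balancing hypothesis on $\rho$ needed in both theorems. Assertion~(1) is then immediate from part~1 of Theorem~\ref{theo:Lp-existence}.

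For assertion~(2) I would first check the $\ell_2$-bounded-tails condition~\eqref{eq:square-summable}. Since $\sum_{j\ge i}\log g_j(a)=a\sum_{j\ge i}j^{-\gamma}=a\,\alpha_{i-1}$ and $\alpha_{i-1}=\sum_{j\ge i}j^{-\gamma}\sim(\gamma-1)^{-1}i^{1-\gamma}$, we get $\max_{a}(\sum_{j\ge i}\log g_j(a))^2=\alpha_{i-1}^2\asymp i^{2-2\gamma}$, and $\sum_i i^{2-2\gamma}<\infty$ precisely when $2\gamma-2>1$, i.e. $\gamma>3/2$. Hence, for $\gamma>3/2$, part~2 of Theorem~\ref{theo:Lp-existence} applies directly and yields that $h=\prod_i h_i(x_i)$ with $h_i(a)=\prod_{k>i}g_k(a)=e^{a\alpha_i}$, that is $h(x)=\exp(\sum_i\alpha_i x_i)$, lies in $L^p(X,\rho)$ for all $1\le p<\infty$ and is an almost surely positive eigenfunction of $\mathcal L_f$ with eigenvalue $\lambda=g_0\sum_{a}\prod_{k\ge 1}g_k(a)=\sum_{a\in\{-1,1\}}e^{a\zeta(\gamma)}=2\cosh(\zeta(\gamma))$.

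For the uniqueness equivalence I would invoke Theorem~\ref{theo:uniqueness-of-h}, whose hypotheses ($g$ balanced with $\ell_2$-summable tails, $\rho$ a product measure with $\int\log g_k\,d\rho=0$) have all just been verified; it remains only to identify the operator $P_{\mu_0}$ of that theorem with the operator $P$ of the present statement. Here $\mu_0(a)=\lambda^{-1}\prod_{l\ge 1}g_l(a)=e^{a\zeta(\gamma)}/(2\cosh\zeta(\gamma))$, so $P_{\mu_0}\phi(x)=\lambda^{-1}\big(\phi(1,x)e^{\zeta(\gamma)}+\phi(-1,x)e^{-\zeta(\gamma)}\big)$, which is exactly $P$. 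Theorem~\ref{theo:uniqueness-of-h} then states that $\mathcal L_f$ on $L^1(X,\rho)$ has, up to scalar multiples, a unique eigenfunction for $\lambda$ if and only if $P$ is ergodic. The only point not covered by a direct citation is the passage from $L^1$ to general $1\le p<\infty$: the "if" direction uses $L^p(X,\rho)\subset L^1(X,\rho)$ together with $h\in L^p$ from Theorem~\ref{theo:Lp-existence}, while the converse requires that, when $P$ is non-ergodic, the second eigenfunction $\psi h$ produced in the proof of Theorem~\ref{theo:uniqueness-of-h} actually lies in $L^p$ — which follows once one notes that a nonconstant $P$-fixed function $\psi$ may be taken bounded, so that $\psi h\in L^p$. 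This last bookkeeping step — confirming the auxiliary eigenfunction is in $L^p$ and not merely $L^1$ — is the one place I would be most careful; everything else is verification and specialization.
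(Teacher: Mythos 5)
Your proposal is correct and is exactly the paper's route: the paper gives no separate proof of Theorem \ref{theo:example}, presenting it verbatim as Theorems \ref{theo:Lp-existence} and \ref{theo:uniqueness-of-h} ``adapted to our example,'' and your verification of the hypotheses (balanced, $\ell_1$-bounded, summable, $\ell_2$-bounded tails precisely for $\gamma>3/2$) together with the identifications $\lambda=2\cosh(\zeta(\gamma))$, $h(x)=\exp(\sum_i\alpha_i x_i)$ and $P_{\mu_0}=P$ is what that adaptation amounts to. Your closing remark about passing from $L^1$ to $L^p$ in the converse direction addresses a point the paper itself silently elides, so you are if anything slightly more careful than the source.
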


\affiliationone{
   L. Cioletti\\
   Departamento de Matem\'atica, UnB
   \\
   Brazil
   \email{cioletti@mat.unb.br}
   }
\affiliationtwo{
   M. Denker \\
   Mathematics Department, Pennsylvania\\ State University, U.S.A.
   \email{denker@math.psu.edu}
   }
\affiliationthree{
   A. O. Lopes \\
   Departamento de Matem\'atica Pura 
   \\ e Aplicada,  UFRGS,  Brazil.
   \email{arturoscar.lopes@gmail.com}
   }
\affiliationfour{
   M. Stadlbauer\\
   Departamento de Matem\'atica, UFRJ\\
   Brazil
   \email{manuel.stadlbauer@gmail.com}}
   
  \vfill 
\end{document}